\documentclass[a4paper, reqno]{amsart}
\usepackage{amssymb, amsmath, amsthm, chngcntr, enumerate, mathabx, mathrsfs, mathtools}
\usepackage[numbers]{natbib}

\newtheorem{thm}{Theorem}

\newtheorem{corollary}[thm]{Corollary}
\newtheorem{lemma}[thm]{Lemma}
\newtheorem{proposition}[thm]{Proposition}

\theoremstyle{remark}
\newtheorem{rmk}[thm]{Remark}

\newcommand{\C}{\mathbb{C}}
\newcommand{\R}{\mathbb{R}}
\newcommand{\Z}{\mathbb{Z}}
\newcommand{\N}{\mathbb{N}}
\newcommand{\T}{\mathbb{T}}
\newcommand{\D}{\mathbb{D}}

\title[Extensions of a theorem of Pichorides]{Multi-parameter extensions of a theorem of Pichorides}
\author[Bakas, Rodr\'iguez-L\'opez, and Sola]{Odysseas Bakas, Salvador Rodr\'iguez-L\'opez, and Alan Sola}
\address{Department of Mathematics, Stockholm University, 106 91 Stockholm, Sweden}
\email{bakas@math.su.se, s.rodriguez-lopez@math.su.se, sola@math.su.se}

\subjclass[2010]{42B15, 42B25, 42B30.}
\keywords{Square function, Marcinkiewicz multipliers, $L^p$ estimates.}
\thanks{The second author is partially supported by the Spanish Government grant MTM2016-75196-P}

\begin{document}

\begin{abstract}
Extending work of Pichorides and Zygmund to the $d$-dimensional setting, we show that the supremum of $L^p$-norms of the Littlewood-Paley square function over the unit ball of the analytic Hardy spaces $H^p_A(\T^d)$ blows up like $(p-1)^{-d}$ as $p\to 1^+$. Furthermore, we obtain an $L\log^d L$-estimate for square functions on $H^1_A(\T^d)$. Euclidean variants of Pichorides's theorem are also obtained.
\end{abstract}

\maketitle

\section{Introduction}
Given a trigonometric polynomial $f$ on $\T$, we define the classical Littlewood-Paley square function $S_{\T} (f)$ of $f$ by
$$  S_{\T} (f) (x) = \big( \sum_{k\in \Z} | \Delta_k (f) (x) |^2 \big)^{1/2},$$
where for $k \in \N$, we set
$$\Delta_k (f) (x) = \sum_{n=2^{k-1}}^{2^k-1} \widehat{f}(n) e^{i 2 \pi n x} \quad \mathrm{and} \quad \Delta_{-k} (f) (x) = \sum_{n=-2^k+1 }^{-2^{k-1} } \widehat{f}(n) e^{i 2 \pi n x}$$
 and for $k = 0$ we take $\Delta_0 (f) (x) = \widehat{f} (0)$.

A classical theorem of J.E. Littlewood and R.E.A.C. Paley asserts that for every $1<p<\infty$ there exists a constant $B_p >0$ such that
\begin{equation}\label{L-P_classical_ineq}
 \| S_{\T} (f) \|_{L^p (\T)} \leq B_p \| f \|_{L^p (\T)}
\end{equation}
for every trigonometric polynomial $f$ on $\T$, see, e.g., \cite{Edwards_Gaudry} or \cite{Zygmund_book}.

The operator $S_{\T}$ is not bounded on $L^1 (\T)$, and hence, the constant $B_p$ in (\ref{L-P_classical_ineq}) blows up as $p \rightarrow 1^+$. In \cite{Bourgain}, J. Bourgain obtained the sharp estimate
\begin{equation}\label{as_1} 
B_p \sim (p-1)^{-3/2} \quad \textrm{as}\quad p\to 1^+.
\end{equation}
For certain subspaces of $L^p(\T)$, however, one might hope for better bounds. In \cite{Pichorides}, S. Pichorides showed that for the analytic Hardy spaces $H_A^p(\T)$ ($1<p \leq 2$), we have
\begin{equation}\label{Pichorides_thm}
\sup_{\substack{  \| f \|_{L^p (\T)} \leq 1 \\  f \in H_A^p (\T) } }  \| S_{\T} (f) \|_{L^p (\T) } \sim (p-1)^{-1} \quad \textrm{as} \quad p\to 1^+.
\end{equation}

Higher-dimensional extensions of Bourgain's result (\ref{as_1}) were obtained by the first author in \cite{Bakas}. In particular, given a dimension $d \in \N$, if $f$ is a trigonometric polynomial on $\T^d$, we define its $d$-parameter Littlewood-Paley square function by
$$ S_{\T^d} (f) (x) = \big( \sum_{k_1, \cdots, k_d \in \Z} | \Delta_{k_1, \cdots, k_d} (f) (x) |^2 \big)^{1/2} ,$$
where for $k_1, \cdots, k_d \in \Z$ we use the notation 
$ \Delta_{k_1, \cdots, k_d} (f)  = \Delta_{k_1} \otimes \cdots \otimes \Delta_{k_d} (f)$, where the direct product notation indicates the operator $\Delta_{k_j}$ in the $j$-th position acting on the $j$-th variable. As in the one-dimensional case, for every $1<p<\infty$, there is a positive constant $B_p (d)$ such that
$$ \| S_{\T^d} (f) \|_{L^p (\T^d)} \leq B_p (d) \| f \|_{L^p (\T^d)}$$
for each trigonometric polynomial $f$ on $\T^d$. It is shown in \cite{Bakas} that
\begin{equation}\label{as_1_n}
B_p (d) \sim_d (p-1)^{-3d/2} \quad \textrm{as} \quad p\rightarrow 1^+.
\end{equation}

A natural question in this context is whether one has an improvement on the limiting behaviour of $B_p (d)$ as $p \rightarrow 1^+$ when restricting to the analytic Hardy spaces $H^p_A (\T^d)$. In other words, one is led to ask whether the aforementioned theorem of Pichorides can be extended to the polydisc. However, the proof given in \cite{Pichorides} relies on factorisation of Hardy spaces, and it is known, see for instance \cite[Chapter 5]{Rudin} and \cite{RubShi}, that canonical factorisation fails in higher dimensions. 

In this note we obtain an extension of (\ref{Pichorides_thm}) to the polydisc as a consequence of a more general result involving tensor products of Marcinkiewicz multiplier operators on $\T^d$. Recall that, in the periodic setting, a multiplier operator $T_m$ associated to a function $m \in \ell^{\infty} (\Z)$ is said to be a Marcinkiewicz multiplier operator on the torus if $m$ satisfies
\begin{equation}\label{Marcinkiewicz_torus}
 B_m  = \sup_{k \in \N} \big[  \sum_{n=2^k -1}^{2^{k+1}} |m(n+1) - m(n)| +   \sum_{n=-2^{k+1}}^{-2^k +1} |m(n+1) - m(n)|  \big]  < \infty .
\end{equation}
Our main result in this paper is the following theorem.

\begin{thm}\label{Main_Theorem}
Let $d \in \N$ be a given dimension. If $T_{m_j}$ is a Marcinkiewicz multiplier operator
on $\T$ $(j=1, \cdots, d )$, then for every $ f \in H_A^p (\T^d)$ one has
$$   \| ( T_{m_1} \otimes \cdots \otimes T_{m_d} )  (f) \|_{L^p (\T^d) } \lesssim_{C_{m_1}, \cdots, C_{m_d} } (p-1)^{-d} \| f \|_{L^p (\T^d)} $$
as $p \rightarrow 1^+$, where $C_{m_j} = \| m_j \|_{\ell^{\infty} (\Z)} + B_{m_j}$, 
$B_{m_j}$ being as in $(\ref{Marcinkiewicz_torus})$, $j=1, \cdots, d$.
\end{thm}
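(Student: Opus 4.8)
The plan is to deduce the $d$-dimensional estimate from its one-dimensional case by iterating in each variable, and to prove the one-dimensional case by interpolating a trivial $L^2$ bound against an endpoint estimate on $H^1_A(\T)$.

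For the reduction to $d=1$, write $T_{m_1}\otimes\cdots\otimes T_{m_d}=A_1\cdots A_d$, where $A_j$ is $T_{m_j}$ acting in the $j$th variable and the identity in the others; these operators commute. Put $f_0=f$ and $f_j=A_jf_{j-1}$, so that $f_d=(T_{m_1}\otimes\cdots\otimes T_{m_d})(f)$. Each $A_j$ is a Fourier multiplier, so the spectrum of $f_j$ is contained in that of $f_{j-1}$, hence in $\N_0^d$ throughout; in particular, for almost every value of the remaining variables the slice of $f_{j-1}$ in the $j$th variable belongs to $H^p_A(\T)$. Applying the one-dimensional estimate in that variable, raising to the $p$th power, and integrating in the other variables by Fubini's theorem gives $\|f_j\|_{L^p(\T^d)}\lesssim_{C_{m_j}}(p-1)^{-1}\|f_{j-1}\|_{L^p(\T^d)}$. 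Chaining the $d$ resulting inequalities proves the theorem, the constants $C_{m_1},\dots,C_{m_d}$ multiplying and the powers of $(p-1)^{-1}$ adding up to $(p-1)^{-d}$; this step is routine once the case $d=1$ is available.

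For $d=1$ we must show that $\|T_mf\|_{L^p(\T)}\lesssim_{C_m}(p-1)^{-1}\|f\|_{L^p(\T)}$ for $f\in H^p_A(\T)$ as $p\to 1^+$, where $T_m$ is a Marcinkiewicz multiplier operator on $\T$. Since $f$ is analytic we may replace $m$ by $m\,\mathbf 1_{[0,\infty)}$, and the contribution of the frequency $n=0$ is a constant bounded by $C_m\|f\|_{L^p}$, so we may assume $m$ is supported on $\{1,2,\dots\}$, with variation over each dyadic block bounded by $B_m$ and $\|m\|_{\ell^\infty}\le C_m$. We have the trivial bound $\|T_m\|_{L^2(\T)\to L^2(\T)}\le\|m\|_{\ell^\infty}\le C_m$. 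The substance of the proof is an endpoint estimate at $p=1$ on the analytic Hardy space: a weak-type bound $\|T_mf\|_{L^{1,\infty}(\T)}\lesssim_{C_m}\|f\|_{L^1(\T)}$, or the Zygmund-type bound $\|T_mf\|_{L^1(\T)}\lesssim_{C_m}\|f\|_{L\log L(\T)}$, valid for $f\in H^1_A(\T)$. Combining such an endpoint with the $L^2$ bound — by interpolation/extrapolation along the scale $\{H^p_A(\T)\}$, or by a direct Calderón--Zygmund argument whose threshold is tuned to $p$ — produces the operator norm $\lesssim C_m(p-1)^{-1}$ for $1<p\le 2$, which is the asserted rate as $p\to 1^+$.

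To prove the endpoint one runs a Calderón--Zygmund decomposition $f=g+b$ at height $\lambda$: denoting by $P_+$ the Riesz projection onto non-negative frequencies, one writes $f=P_+f=P_+g+P_+b$ and controls the good part through the $L^2$ bound, since $\|P_+g\|_{L^2}\le\|g\|_{L^2}\lesssim(\lambda\|f\|_{L^1})^{1/2}$. The difficulty, and the whole reason one gains by restricting to analytic functions, lies in the bad part $b=\sum_Q b_Q$: a general Marcinkiewicz multiplier is not associated with a Calderón--Zygmund kernel, so the usual Hörmander-condition argument is unavailable, and one must instead exploit the one-sided structure — each $b_Q$ has mean zero, so $P_+b_Q$ decays away from its cube, and this extra decay of the Riesz projections of the pieces of $b$ is what replaces the missing kernel regularity and lets the estimate of the exceptional set close. (Alternatively, one may argue directly on an atomic decomposition of $H^1_A(\T)$, bounding $T_m$ on a single analytic atom.) This is the main obstacle. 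It is essential \emph{not} to dominate $\|T_mf\|_{L^p}$ by $\|(\sum_k|\Delta_k(T_mf)|^2)^{1/2}\|_{L^p}$ and then estimate dyadic blocks one at a time: the reconstruction step already costs a factor $(p-1)^{-1/2}$, while the square function of $T_mf$ is in general of size $\sim(p-1)^{-1}\|f\|_{L^p}$ — this being Pichorides's theorem \eqref{Pichorides_thm} itself — so that route yields only the weaker rate $(p-1)^{-3/2}$.
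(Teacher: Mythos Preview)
Your overall architecture matches the paper's: prove the one-dimensional bound $\|T_m f\|_{L^p(\T)}\lesssim_{C_m}(p-1)^{-1}\|f\|_{L^p(\T)}$ for $f\in H^p_A(\T)$ by combining an $L^2$ bound with an endpoint $H^1_A\to L^{1,\infty}$ estimate, and then iterate in each variable via Fubini. The iteration step is fine and is exactly what the paper does.

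The gap is in the endpoint. You propose to obtain $\|T_m f\|_{L^{1,\infty}}\lesssim\|f\|_{L^1}$ for analytic $f$ by an ordinary Calder\'on--Zygmund decomposition $f=g+b$ followed by the Riesz projection, claiming that the $O(|x-c_Q|^{-2})$ decay of $P_+b_Q$ ``replaces the missing kernel regularity.'' It does not: that decay is a property of the \emph{input} $P_+b_Q$, but what must be controlled outside the exceptional set is the \emph{output} $T_m(P_+b_Q)$, and a Marcinkiewicz multiplier has no kernel condition that would transport spatial decay of $P_+b_Q$ into spatial decay of $T_m(P_+b_Q)$. The paper does not attempt this; it invokes the Tao--Wright theorem that any Marcinkiewicz multiplier maps $H^1(\R)$ to $L^{1,\infty}(\R)$, transfers it to $\T$, and then restricts to $H^1_A(\T)$. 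That endpoint is a genuinely deep result and is used as a black box.

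A second, smaller issue is the interpolation. Ordinary Marcinkiewicz interpolation uses the truncation $f\mapsto f\chi_{\{|f|\le\lambda\}}$, which destroys analyticity, so the weak-$(1,1)$ bound on $H^1_A$ and the strong-$(2,2)$ bound on $H^2_A$ cannot be combined by the usual argument. The paper handles this with the Kislyakov--Xu decomposition (Lemma~\ref{KX_P}): for $f\in H^{p_0}_A(\T)$ and each $\lambda>0$ one writes $f=h_\lambda+g_\lambda$ with $h_\lambda\in H^\infty_A$, $g_\lambda\in H^{p_0}_A$, $|h_\lambda|\lesssim\lambda\min\{\lambda^{-1}|f|,\lambda|f|^{-1}\}$ and $\|g_\lambda\|_{L^1}\lesssim\int_{\{|f|>\lambda\}}|f|$. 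Running the layer-cake computation with this \emph{analytic} splitting gives exactly the $(p-1)^{-1}$ rate. Your phrase ``interpolation/extrapolation along the scale $\{H^p_A(\T)\}$'' gestures at this, but the specific tool is essential and should be named.
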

 
To prove Theorem \ref{Main_Theorem}, we use a theorem of T. Tao and J. Wright \cite{TW} on the endpoint mapping properties of Marcinkiewicz multiplier operators on the line, transferred to the periodic setting, with a variant of Marcinkiewicz interpolation for Hardy spaces which is due to S. Kislyakov and Q. Xu \cite{KX}. 

Since for every choice of signs the randomised version $ \sum_{k \in \Z} \pm  \Delta_k$ of $S_{\T}$ is a Marcinkiewicz multiplier operator on the torus with corresponding constant $B_m \leq 2$, Theorem \ref{Main_Theorem} and Khintchine's inequality yield the following $d$-parameter extension of Pichorides's theorem \eqref{Pichorides_thm}.
 
\begin{corollary}\label{Pichorides_extension}
Given $d \in \N$, if $S_{\T^d}$ denotes the $d$-parameter Littlewood-Paley square function, then one has
$$ \sup_{\substack{  \| f \|_{L^p (\T^d)} \leq 1 \\  f \in H_A^p (\T^d) } }  \| S_{\T^d} (f) \|_{L^p (\T^d) } \sim_d (p-1)^{-d} \quad \textrm{as}\quad p\to 1^ +.$$
\end{corollary}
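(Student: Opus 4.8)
The plan is to prove the stated two-sided asymptotic by treating the upper and lower bounds separately. The upper bound $\lesssim_d (p-1)^{-d}$ is meant to follow from Theorem~\ref{Main_Theorem} combined with Khintchine's inequality, while the matching lower bound $\gtrsim_d (p-1)^{-d}$ is meant to follow by tensorising the one-dimensional extremal functions provided by Pichorides's theorem~\eqref{Pichorides_thm}.

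For the upper bound I would introduce $d$ mutually independent Rademacher systems $(r_k(\omega_j))_{k\in\Z}$, $\omega_j\in[0,1]$, $j=1,\dots,d$, and, for fixed $\omega=(\omega_1,\dots,\omega_d)\in[0,1]^d$, consider the operator
$$ T^{\omega} = \bigotimes_{j=1}^d \Big( \sum_{k\in\Z} r_k(\omega_j)\,\Delta_k \Big). $$
Each factor $\sum_k r_k(\omega_j)\Delta_k$ is a Marcinkiewicz multiplier operator on $\T$ whose symbol is constant on dyadic blocks with values in $\{-1,1\}$, hence has $\ell^\infty$-norm $1$ and satisfies \eqref{Marcinkiewicz_torus} with constant at most $2$; thus the quantities $C_{m_j}$ are bounded by $3$, uniformly in $\omega$. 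Applying Theorem~\ref{Main_Theorem} to $T^{\omega}$ then yields, for every $f\in H^p_A(\T^d)$,
$$ \| T^{\omega}(f) \|_{L^p(\T^d)} \lesssim_d (p-1)^{-d}\,\| f \|_{L^p(\T^d)} $$
with implied constant independent of $\omega$. Raising this to the $p$-th power, integrating in $\omega$ over $[0,1]^d$, using Fubini, and then applying Khintchine's inequality successively in each of the $d$ Rademacher variables converts $\int_{[0,1]^d}|T^\omega(f)(x)|^p\,d\omega$ into a quantity comparable, with $d$-dependent constants, to $(S_{\T^d}(f)(x))^p$; integrating in $x$ gives $\|S_{\T^d}(f)\|_{L^p(\T^d)}\lesssim_d (p-1)^{-d}\|f\|_{L^p(\T^d)}$. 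The one point needing care here is that the constants in the iterated use of Khintchine's inequality stay bounded as $p\to 1^+$, which holds since we are in the range $1<p\le 2$.

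For the lower bound I would use the non-trivial direction of \eqref{Pichorides_thm}: for each $p$ close to $1$ there is $g=g_p\in H^p_A(\T)$ with $\|g\|_{L^p(\T)}\le 1$ and $\|S_{\T}(g)\|_{L^p(\T)}\gtrsim (p-1)^{-1}$. Setting $f(x_1,\dots,x_d)=\prod_{j=1}^d g(x_j)$, one has $f\in H^p_A(\T^d)$ (a product of analytic functions is analytic on the polydisc), $\|f\|_{L^p(\T^d)}=\prod_{j=1}^d\|g\|_{L^p(\T)}\le 1$, and, since $\Delta_{k_1,\dots,k_d}(f)=\bigotimes_{j=1}^d \Delta_{k_j}(g)$, the square function factors as $S_{\T^d}(f)(x)=\prod_{j=1}^d S_{\T}(g)(x_j)$. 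Consequently $\|S_{\T^d}(f)\|_{L^p(\T^d)}=\prod_{j=1}^d\|S_{\T}(g)\|_{L^p(\T)}\gtrsim (p-1)^{-d}$, which together with the upper bound completes the proof.

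I do not expect a serious obstacle in the corollary itself: the substantial work is already contained in Theorem~\ref{Main_Theorem}. The only genuinely delicate points are the bookkeeping of constants in the iterated Khintchine estimate (so that nothing degenerates as $p\to 1^+$) and the verification that the randomised, tensored square function really decomposes as a tensor product of one-dimensional Marcinkiewicz multiplier operators with uniformly controlled Marcinkiewicz constants, since it is precisely this structure that allows Theorem~\ref{Main_Theorem} to be invoked.
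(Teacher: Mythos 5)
Your proposal matches the paper's proof essentially verbatim: the upper bound by applying Theorem~\ref{Main_Theorem} to tensor products of randomised Littlewood--Paley operators $\sum_k \pm\Delta_k$ (which have uniformly bounded Marcinkiewicz constants) and then invoking the multi-dimensional Khintchine inequality~\eqref{multi_Khintchine}, and the lower bound by taking $d$-fold tensor powers of Pichorides's one-dimensional extremal function and using the multiplicative factorisation of $S_{\T^d}$ on such products. The only cosmetic difference is that you spell out the iterated Khintchine step that the paper delegates to a citation.
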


The present paper is organised as follows: In the next section we set down notation and provide some background, and in Section \ref{Proof_of_Thm} we prove our main results. Using the methods of Section \ref{Proof_of_Thm}, in Section \ref{Application} we extend a well-known inequality due to A. Zygmund \cite[Theorem 8]{Zygmund} to higher dimensions. In the last section we obtain a Euclidean version of Theorem \ref{Main_Theorem} by using the aforementioned theorem of Tao and Wright \cite{TW} combined with a theorem of Peter Jones \cite{PeterJones} on a Marcinkiewicz-type decomposition for analytic Hardy spaces over the real line.

\section{Preliminaries}

\subsection{Notation}
We denote the set of natural numbers by $\N$, by $\N_0$ the set of non-negative integers and by $\Z$ the set of integers.

Let $f$ be a function of $d$-variables. Fixing the first $d-1$ variables $(x_1, \cdots, x_{d-1})$, we write $f(x_1, \cdots, x_d) = f_{(x_1, \cdots, x_{d-1})} (x_d)$.
The sequence of the Fourier coefficients of a function  $f\in L^1(\T^d)$ will be denoted by $\hat{f}$.

Given a function $m \in L^{\infty} (\R^d)$, we denote by $T_m$ the multiplier operator corresponding to $m$, initially defined on $L^2 (\R^d)$, by $(T_m (f) )^{\widehat{\ }} (\xi) = m(\xi) \widehat{f} (\xi)$, $\xi \in \R^d$. Given $\mu \in \ell^{\infty} (\Z^d) $, one defines (initially on $L^2 (\T^d)$) the corresponding periodic multiplier operator $T_{\mu}$ in an analogous way.

 If $\lambda $ is a continuous and bounded function on the real line and $T_{\lambda} $ is as above, $T_{\lambda |_{\Z}}$ denotes the periodic multiplier operator such that $T_{\lambda |_{\Z}} (f) (x) = \sum_{n \in \Z} \lambda (n) \widehat{f} (n) e^{i 2\pi n x}$ ($x \in \T$) for every trigonometric polynomial $f$ on $\T$.

Given two positive quantities $X$ and $Y$ and a parameter $\alpha$, we write $X \lesssim_{\alpha} Y$ (or simply $X \lesssim Y$) whenever there exists a constant $C_{\alpha} >0$ depending on $\alpha$ so that $X \leq C_{\alpha} Y$. If $X \lesssim_{\alpha} Y$ and $Y \lesssim_{\alpha} X$, we write $X \sim_{\alpha} Y$ (or simply $X \sim Y$).  

\subsection{Hardy spaces and Orlicz spaces}\label{Hardy_Orlicz}

Let $d \in \N$. For $0<p<\infty$, let $H^p_A (\D^d)$ denote the space of holomorphic functions $F$ on $\D^d$, $\D = \{ z \in \C : |z| < 1 \}$, such that
$$ \| F \|_{H^p_A (\D^d)}^p = \sup_{0 \leq r_1 , \cdots, r_d <1} \int_{\T^d} |F ( r_1 e^{i 2 \pi x_1}, \cdots, r_d e^{i 2 \pi x_d} )|^p d x_1 \cdots d x_d < \infty. $$ 
For $p= \infty$, $H^{\infty}_A (\D^d)$ denotes the class of bounded holomorphic functions on $\D^d$. It is well-known that for $1 \leq p \leq \infty$, the limit $f$ of $F \in H^p_A (\D^d) $ as we approach the distinguished boundary $\T^d$ of $\D^d$, namely
$$ f(x_1, \cdots, x_d) = \lim_{r_1, \cdots, r_d \rightarrow 1^-} F ( r_1 e^{i 2 \pi x_1}, \cdots, r_d e^{i 2 \pi x_d} )$$ 
exists a.e. in $\T^d$ and $\| F \|_{H^p_A (\D^d)} = \| f \|_{L^p (\T^d)}$. For $1 \leq p \leq \infty$, we define the analytic Hardy space $H^p_A (\T^d)$ on the $d$-torus as the space of all functions in $L^p (\T^d)$ that are boundary values of functions in $H^p_A (\D^d)$. Moreover, it is a standard fact that $ H_A^p (\T^d) = \{ f \in L^p (\T^d)\colon \ \mathrm{supp} (\widehat{f}) \subset \N_0^d \} $. 
Hardy spaces are discussed in Chapter 7 in \cite{Duren}, where the case $d=1$ is treated, and in Chapter 3 of \cite{Rudin}.

If $f \in L^1 (\T^d)$ is such that $\mathrm{supp} (  \widehat{f}) $ is finite, then $f$ is said to be a trigonometric polynomial on $\T^n$, and if moreover $\mathrm{supp} ( \widehat{f} ) \subset \N^d_0$, then $f$ is said to be analytic. It is well-known \cite{Duren, Rudin} that for $1\leq p < \infty$, the class of trigonometric polynomials on $\T^d$ is a dense subspace of $L^p (\T^d)$ and analytic trigonometric polynomials on $\T^d$ are dense in $H^p_A (\T^d)$. 

We define the real Hardy space $H^1 (\T)$ to be the space of all integrable functions $f \in L^1 (\T)$ such that $H_{\T} (f) \in L^1 (\T)$, where $H_{\T} (f) $ denotes the periodic Hilbert transform of $f$. One  sets $\| f \|_{H^1 (\T)}  = \| f \|_{L^1 (\T)} + \| H_{\T} (f) \|_{L^1 (\T)}$. 
Note that $H^1_A (\T)$ can be regarded as a proper subspace of $H^1 (\T)$ and moreover, $\| f \|_{H^1 (\T)} = 2 \| f \|_{L^1 (\T)}$ when $f \in H^1_A (\T)$.

Given $d\in \N$, for $0<p<\infty$, let $H^p_A ((\R^2_+)^d)$ denote the space of holomorphic functions $F$ on $(\R^2_+)^d $, where $\R^2_+ = \{ x + iy \in \C : y >0 \}$, such that
$$ \| F \|_{H^p_A ((\R^2_+)^d)}^p = \sup_{y_1, \cdots, y_d >0} \int_{\R^d} |F(x_1 + i y_1, \cdots, x_d + i y_d) |^p  dx_1 \cdots d x_d < \infty. $$
For $p = \infty$, $H^{\infty}_A ((\R^2_+)^d)$ is defined as the space of bounded holomorphic functions in $(\R^2_+)^d$.
For $1 \leq p \leq \infty$, for every $F \in H^p_A ((\R^2_+)^d)$ its limit $f$ as we approach the boundary $\R^d$, namely 
$$f (x_1, \cdots, x_d) = \lim_{y_1, \cdots, y_d \rightarrow 0^+} F (x_1 + i y_1, \cdots, x_d + i y_d),$$
 exists for a.e. $(x_1, \cdots, x_d)  \in \R^d$ and, moreover, $\| F \|_{H^p_A ((\R^2_+)^d)} = \| f \|_{L^p (\R^d)}$. Hence, as in the periodic setting, for $1 \leq p \leq \infty$ we may define the $d$-parameter analytic Hardy space $H^p_A (\R^d)$ to be the space of all functions in $L^p (\R^d)$ that are boundary values of functions in $H^p_A ((\R^2_+)^d)$.

The real Hardy space $H^1 (\R)$ on the real line is defined as the space of all integrable functions $f$ on $\R$ such that $H (f) \in L^1 (\R)$, where $H(f)$ is the Hilbert transform of $f$. Moreover, we set $\| f \|_{H^1 (\R)} = \| f \|_{L^1 (\R)} + \| H (f) \|_{L^1 (\R)}$.

We shall also consider the standard Orlicz spaces $L \log^r L (\T^d)$. 
For $r >0$, one may define $L \log^r L (\T^d)$ as the space of measurable functions $f$ on $\T^d$ such that $ \int_{\T^d} |f(x)| \log^r (1+|f(x)|) dx < \infty$. For $r \geq 1$, we may equip $L \log^r L (\T^d)$ with a norm given by $$ \| f \|_{L \log^r L (\T^d)} = \inf \big\{ \lambda >0 : \int_{\T^d} \frac{|f(x)|}{\lambda} \log^r \big( 1+\frac{|f(x)|}{\lambda} \big) dx \leq 1  \big\}. $$
For more details on Orlicz spaces, we refer the reader to the books \cite{KR} and \cite{Zygmund_book}.

\section{Proof of Theorem \ref{Main_Theorem}}\label{Proof_of_Thm}

Recall that a function $m \in L^{\infty} (\R)$ is said to be a Marcinkiewicz multiplier on $\R$ if it is differentiable in every dyadic interval $\pm [2^k, 2^{k+1})$, $k \in \Z$ and 
\begin{equation}\label{Marcinkiewicz_condition}
 A_m  = \sup_{k \in \Z} \big[  \int_{[2^k ,2^{k+1})} |m' (\xi) | d \xi +  \int_{(-2^{k+1} , - 2^k ]} |m' (\xi) | d \xi \big]  < \infty 
\end{equation} 

If $ m \in L^{\infty} (\R)$ satisfies (\ref{Marcinkiewicz_condition}), then thanks to a classical result of J. Marcinkiewicz\footnote{Marcinkiewicz originally proved the theorem in the periodic setting, see \cite{Marcinkiewicz}.}, see e.g. \cite{Stein}, the corresponding multiplier operator $T_m$ is bounded on $L^p (\R)$ for all $1< p < \infty$.  In \cite{TW}, Tao and Wright showed that every Marcinkiewicz multiplier operator $T_m$ is bounded from the real Hardy space $H^1 (\R)$ to $L^{1, \infty} (\R)$, namely
\begin{equation}\label{TW_ineq} 
\| T_m (f) \|_{L^{1,\infty} (\R)} \leq C_m \| f \|_{H^1 (\R)},
\end{equation}
where the constant $C_m$ depends only on $ \| m \|_{L^{\infty} (\R)} + A_m$, with $A_m$ as in (\ref{Marcinkiewicz_condition}). For the sake of completeness, let us recall that $L^{1,\infty}(\mathcal{M})$ stands for the quasi-Banach space of measurable functions on a measure space $\mathcal{M}$ endowed with the quasinorm  
$$	\|f\|_{L^{1,\infty}(\mathcal{M})}:= \sup_{t>0} t \, \int_{\{x\in \mathcal{M}: |f(x)|>t\}} dx. $$

Either by adapting the proof of Tao and Wright to the periodic setting or by using {a transference argument}, see Subsection \ref{transference}, one deduces that every periodic Marcinkiewicz multiplier operator $T_m$ satisfies
\begin{equation}\label{TW_periodic} 
\| T_m (f) \|_{L^{1,\infty} (\T)} \leq D_m \| f \|_{H^1 (\T)},
\end{equation}
where $D_m$ depends on $\| m \|_{\ell^{\infty}(\Z)} + B_m$, $B_m$ being as in (\ref{Marcinkiewicz_torus}). Therefore, it follows that for every $f \in H^1_A (\T)$ one has
\begin{equation}\label{TW_analytic} 
\| T_m (f) \|_{L^{1,\infty} (\T)} \leq D'_m \| f \|_{L^1 (\T)},
\end{equation}
where one may take $D'_m = 2 D_m$. 
We shall prove that for every Marcinkiewicz multiplier operator $T_m$ on the torus one has
\begin{equation}\label{Marcinkiewicz_norm}
  \sup_{\substack{  \| f \|_{L^p (\T)} \leq 1 \\  f \in H_A^p (\T) } }  \| T_m (f) \|_{L^p (\T) } \lesssim_{B_m} (p-1)^{-1}  
 \end{equation}
as $p \rightarrow 1^+$. To do this, we shall make use of the following lemma due to Kislyakov and Xu \cite{KX}.

\begin{lemma}[Kislyakov and Xu, \cite{KX} and \cite{Pavlovic}] \label{KX_P}
If $f \in H^{p_0}_A (\T)$ $(0 < p_0 < \infty)$ and $\lambda >0$, then there exist functions $h_{\lambda} \in H^{\infty}_A (\T)$, $g_{\lambda} \in H^{p_0}_A (\T) $ and a constant $C_{p_0}>0$ depending only on $p_0$ such that
\begin{itemize}
\item $ |h_{\lambda}(x)| \leq C_{p_0} \lambda \min \{ \lambda^{-1} |f(x)|, |f(x)|^{-1} \lambda \} $ for all $x \in \T$,
\item $ \| g_{\lambda} \|^{p_0}_{L^{p_0} (\T)} \leq C_{p_0} \int_{\{ x \in \T : |f (x)| > \lambda \} } |f (x)|^{p_0} dx$, and
\item $f =h_{\lambda} + g_{\lambda}$.
\end{itemize}
\end{lemma}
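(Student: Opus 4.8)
The plan is to realize $h_\lambda$ as a \emph{multiplicative truncation} of $f$: multiply $f$ by a bounded analytic function whose boundary modulus equals $\min\{1,(\lambda/|f|)^2\}$, so that the first and third bullets become essentially automatic and the whole statement reduces to a single $L^{p_0}$-estimate for the complementary piece. Put $u:=\log^{+}(|f|/\lambda)\ge 0$; since $(\log t)^{p_0}\le t^{p_0}$ for $t\ge 1$ one has $u\in L^{p_0}(\T)$ with the quantitative bound
$$ \|u\|_{L^{p_0}(\T)}^{p_0}\le\lambda^{-p_0}\!\!\int_{\{|f|>\lambda\}}\!\!|f|^{p_0}, $$
in particular $-2u\in L^{1}(\T)$. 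Let $w$ be the outer function with boundary modulus $e^{-2u}=\min\{1,(\lambda/|f|)^{2}\}$, that is $w(z)=\exp\big(\frac{1}{2\pi}\int_{0}^{2\pi}\frac{e^{it}+z}{e^{it}-z}(-2u)(e^{it})\,dt\big)$ for $z\in\D$. Since $-2u\le 0$, its Poisson extension is non-positive, so $\|w\|_{H^{\infty}(\D)}\le 1$ and $|w|=e^{-2u}$ a.e.\ on $\T$. Set $h_{\lambda}:=fw$ and $g_{\lambda}:=f-h_{\lambda}=f(1-w)$.

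Then $h_{\lambda}$ and $g_{\lambda}$ are analytic and both lie in $H^{p_0}_{A}(\T)$ ($h_{\lambda}$ because $H^{p_0}_{A}\cdot H^{\infty}_{A}\subset H^{p_0}_{A}$, $g_{\lambda}$ as a difference of such functions), and $f=h_{\lambda}+g_{\lambda}$; this is the third bullet. On $\T$ one has $|h_{\lambda}|=|f|\min\{1,(\lambda/|f|)^{2}\}=\lambda\min\{\lambda^{-1}|f|,|f|^{-1}\lambda\}\le\lambda$, so $h_{\lambda}$ is an $H^{p_0}$-function with bounded boundary values and hence $h_{\lambda}\in H^{\infty}_{A}(\T)$ (see \cite{Duren}); this gives the first bullet with $C_{p_0}=1$. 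Thus everything comes down to bounding $\|g_{\lambda}\|_{L^{p_0}(\T)}^{p_0}=\int_{\T}|f|^{p_0}|1-w|^{p_0}$, which I would split over $\{|f|>\lambda\}$ and its complement.

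For $1<p_0<\infty$ this split closes at once. On $\{|f|>\lambda\}$ one uses $|1-w|\le 2$, giving the contribution $2^{p_0}\int_{\{|f|>\lambda\}}|f|^{p_0}$. On $\{|f|\le\lambda\}$ one has $u=0$, hence $w=e^{-2i\widetilde u}$ with $\widetilde u$ the conjugate function of $u$, whence $|1-w|=2|\sin\widetilde u|\le 2|\widetilde u|$; since $|f|\le\lambda$ there, this contribution is at most $2^{p_0}\lambda^{p_0}\int_{\T}|\widetilde u|^{p_0}$, and by the M.\ Riesz theorem (\cite{Zygmund_book}) $\int_{\T}|\widetilde u|^{p_0}\lesssim_{p_0}\|u\|_{L^{p_0}}^{p_0}\le\lambda^{-p_0}\int_{\{|f|>\lambda\}}|f|^{p_0}$. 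Adding the two contributions yields the second bullet with $C_{p_0}$ depending only on $p_0$; note that, in this range, the argument needs no reduction to polynomials.

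The range $0<p_0\le 1$ is the genuine obstacle: the conjugate function is no longer bounded on $L^{p_0}$, the multiplicative construction above fails, and the decomposition has to be built additively. Here I would follow \cite{KX} (see also \cite{Pavlovic}): reduce to $f$ an analytic polynomial by density; Whitney/Calder\'on--Zygmund-decompose the open set $\{f^{*}>\lambda\}$, where $f^{*}$ is the non-tangential maximal function, into arcs $Q_{j}$; write $f$ as a part of size $\lesssim\lambda$ plus pieces $b_{j}$ adapted to the $Q_{j}$ carrying enough vanishing moments for the atomic theory of $H^{p_0}$; and restore analyticity by replacing each piece by its analytic (Riesz) projection, the resulting errors being summable in $H^{p_0}$ thanks to the cancellation and the off-diagonal decay of the Cauchy kernel. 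The unifying point — and the crux of the lemma in both regimes — is that a pointwise truncation of $f$ destroys analyticity, while every remedy for this (an outer-function factor, or the analytic projection) is nonlocal; the substance of the proof is that the ensuing nonlocal error can be absorbed, cleanly via the conjugate-function estimate when $p_0>1$ and only through the full Calder\'on--Zygmund/atomic machinery when $p_0\le 1$.
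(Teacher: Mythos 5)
Your multiplicative construction ($h_\lambda = fw$, $g_\lambda = f(1-w)$ with $w$ outer of boundary modulus $\min\{1,(\lambda/|f|)^2\}$) is the right one and is what underlies the cited references, and the first and third bullets do come out exactly as you say with $C_{p_0}=1$. The gap is in the second bullet, and it sits precisely at the exponent the paper uses. The paper applies this lemma with $p_0=1$ (``by using Lemma \ref{KX_P} for $p_0=1$''), and the remark immediately after the lemma records that $C_{p_0}$ can be taken uniform over $1\le p_0\le 2$. Your only rigorous argument is for $1<p_0<\infty$ and rests on M.\ Riesz boundedness of the conjugate function on $L^{p_0}(\T)$, whose norm blows up like $(p_0-1)^{-1}$. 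So not only does it miss the endpoint $p_0=1$ that the proof of Theorem~\ref{Main_Theorem} actually needs; it also yields $C_{p_0}\to\infty$ as $p_0\to1^+$, contradicting the uniformity the paper asserts and relies on (for the clean $(p-1)^{-1}$ rate in (\ref{Marcinkiewicz_norm})).

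For $p_0\le 1$ you switch to an entirely different, unproved sketch (Calder\'on--Zygmund decomposition of $\{f^*>\lambda\}$ plus ``analytic (Riesz) projection'' of the bad pieces). This is not what Kislyakov--Xu or Pavlovi\'c do, and the sketch itself contains a circular step: the Riesz projection is unbounded on $L^1$/$H^1_A$ at the scales involved, so the assertion that the projection errors are ``summable in $H^{p_0}$'' is precisely the hard point, not a consequence of ``off-diagonal decay.'' What actually closes the estimate in the references, for all $p_0$ and with constants that do not degenerate near $p_0=1$, is to keep the same outer-function construction and replace M.\ Riesz by a mean-value identity: since $w$ is outer with $|w|=e^{-2u}$, $u=\log^+(|f|/\lambda)\ge 0$, one has $\int_\T \mathrm{Re}\, w\, dx = w(0) = \exp\big(-2\int_\T u\, dx\big)$, whence $\int_\T |1-w|^2\,dx \le 2\big(1-w(0)\big)\le 4\int_\T u\, dx$ with no conjugate-function inequality anywhere. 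The $L^{p_0}$ estimate for $g_\lambda$ is then organised around this identity rather than around $\|\tilde u\|_{L^{p_0}}$. You need to replace the M.\ Riesz step by this device (and carry it through) to cover $p_0=1$ and to justify the uniform-constant remark.
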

We remark that by examining the proof of Lemma \ref{KX_P}, one deduces that when $1\leq p_0 \leq 2$ the constant $C_{p_0} $ in the statement of the lemma can be chosen independent of $p_0$. To prove the desired inequality (\ref{Marcinkiewicz_norm}) and hence Theorem \ref{Main_Theorem} in the one-dimensional case, we argue as in \cite[Theorem 7.4.1]{Pavlovic}. More precisely, given a $1<p<2$, if $f$ is a fixed analytic trigonometric polynomial on $\T$, we first write $ \| T_m (f) \|_{L^p (\T)}^p = p \int_0^{\infty} \lambda^{p-1} |\{ x \in \T : |T_m (f) (x) | > \lambda \}| d \lambda $
and then by using Lemma \ref{KX_P} for $p_0 =1 $ we obtain  $ \| T_m (f) \|_{L^p (\T)}^p \leq I_1 + I_2 $, where 
$$ I_1 = p \int_0^{\infty} \lambda^{p-1} |\{ x \in \T : |T_m (g_{\lambda}) (x) | > \lambda/2 \}| d \lambda  $$
and 
$$ I_2 = p \int_0^{\infty} \lambda^{p-1} |\{ x \in \T : |T_m (h_{\lambda}) (x) | > \lambda/2 \}| d \lambda .$$
To handle $I_1$, we use the boundedness of $T_m$ from $H^1_A (\T)$ to $L^{1,\infty} (\T)$ and Fubini's theorem to deduce that
$$ I_1 \lesssim (p-1)^{-1} \int_{\T} |f (x)|^p dx.$$
To obtain appropriate bounds for $I_2$, we use the boundedness of $T_m$ from $H^2_A (\T)$ to $L^2 (\T)$ and get
$$ I_2 \lesssim  \int_0^{\infty} \lambda^{p-3} (\int_{\{ x \in \T : |f(x)| \leq \lambda \}} |f(x)|^2 dx ) d \lambda + \int_0^{\infty} \lambda^{p+1} (\int_{\{ x \in \T : |f(x)| > \lambda \} } |f(x)|^{-2} dx ) d \lambda. $$ 
Hence, by applying Fubini's theorem to each term, we obtain
$$ I_2 \lesssim (2-p)^{-1} \int_{\T} |f(x)|^p dx + (p+2)^{-1} \int_{\T} |f(x)|^p dx .$$
Combining the estimates for $I_1$ and $I_2$ and using the density of analytic trigonometric polynomials in $( H^p_A (\T), \| \cdot \|_{L^p (\T)})$, (\ref{Marcinkiewicz_norm}) follows.

To prove the $d$-dimensional case, take $f$ to be an analytic trigonometric polynomial on $\T^d$ and note that if $T_{m_j}$ are periodic Marcinkiewicz multiplier operators ($j=1,\cdots,d$), then for fixed $(x_1, \cdots, x_{d-1}) \in \T^{d-1}$ one can write
$$  T_{m_d} (g_{(x_1, \cdots, x_{d-1})}) (x_d) = T_{m_1} \otimes \cdots \otimes T_{m_d} (f) (x_1, \cdots, x_d) ,   $$
where
$$g_{(x_1, \cdots, x_{d-1})} (x_d) = T_{m_1} \otimes \cdots \otimes T_{m_{d-1}} (f_{(x_1, \cdots, x_{d-1})}) (x_d). $$
Hence, by using (\ref{Marcinkiewicz_norm}) in the $d$-th variable, one deduces that
$$ \| T_{m_n} (g_{(x_1, \cdots, x_{d-1})})  \|^p_{L^p (\T)} \leq C_{m_d}^p (p-1)^{-p} \| g_{(x_1, \cdots, x_d)}\|^p_{L^p (\T)}$$
where $C_{m_d} >0$ is the implied constant in (\ref{Marcinkiewicz_norm}) corresponding to $T_{m_d}$. By iterating this argument $d-1$ times, one obtains
$$ \| T_{m_1} \otimes \cdots \otimes T_{m_d} (f)   \|^p_{L^p (\T^d)} \leq [C_{m_1} \cdots C_{m_d}]^p (p-1)^{-dp} \| f \|^p_{L^p (\T^d)}$$
and this completes the proof of Theorem \ref{Main_Theorem}.

\subsection{Proof of Corollary \ref{Pichorides_extension}}\label{Corollary_of_Thm}
We shall use the multi-dimensional version of Khintchine's inquality: if $(r_k)_{k \in \N_0}$ denotes the set of Rademacher functions indexed by $\N_0$ over a probability space $(\Omega, \mathcal{A}, \mathbb{P})$, then for every finite collection of complex numbers $( a_{k_1, \cdots, k_d})_{k_1, \cdots, k_d \in \N_0}$ one has
\begin{equation}\label{multi_Khintchine}
\Big\| \sum_{k_1, \cdots, k_d \in \N_0}  a_{k_1, \cdots, k_d}   r_{k_1} \otimes \cdots \otimes r_{k_d} \Big\|_{L^p (\Omega^d)} \sim_p \big( \sum_{k_1, \cdots, k_d \in \N_0} |a_{k_1, \cdots, k_d}|^2 \big)^{1/2} 
\end{equation}
for all $0<p<\infty $. The implied constants do not depend on $( a_{k_1, \cdots, k_d})_{k_1, \cdots, k_d \in \N_0}$, see e.g. Appendix D in \cite{Stein}, and do not blow up as $p \rightarrow 1$.

Combining Theorem \ref{Main_Theorem}, applied to $d$-fold tensor products of periodic Marcinkiewicz multiplier operators of the form $ \sum_{k \in \Z} \pm \Delta_k$, with the multi-dimensional Khintchine's inequality as in \cite[Section 3]{Bakas} shows that the desired bound holds for analytic polynomials. Since analytic trigonometric polynomials on $\T^d$ are dense in $( H^p_A (\T^d), \| \cdot \|_{L^p (\T^d)})$, we deduce that
$$ \sup_{ \substack{ \| f \|_{L^p (\T^d)} \leq 1 \\ f \in H^p_A (\T^d) } } \| S_{\T^d} (f) \|_{L^p (\T^d)} \lesssim_d (p-1)^{-d} \quad \textrm{as}\quad p \to 1^ +.$$

It remains to prove the reverse inequality.
To do this, for fixed $1<p \leq 2$, choose an $f \in H^p_A (\T)$ such that 
$$ \| S_{\T} (f) \|_{L^p (\T)} \geq C (p-1)^{-1} \| f \|_{L^p (\T)},$$
where $C>0$ is an absolute constant. The existence of such functions is shown in \cite{Pichorides}. Hence, if we define $g \in H^p_A (\T^d)$ by 
$$g  (x_1, \cdots, x_d) = f(x_1) \cdots f (x_d)$$ 
for $(x_1, \cdots, x_d )\in \T^d$, then
\begin{align*}
 \| S_{\T^d} (g) \|_{L^p (\T^d)} &= \| S_{\T} (f) \|_{L^p (\T)} \cdots \|S_{\T} (f) \|_{L^p (\T)}  \\
 &\geq C^d (p-1)^{-d} \| f \|^d_{L^p (\T)}  =C^d (p-1)^{-d} \| g \|_{L^p (\T^d)} 
 \end{align*}
and this proves the sharpness of Corollary \ref{Pichorides_extension}.
\begin{rmk}For the subspace $H^p_{A,\mathrm{diag}}(\T^d)$ of $L^p (\T^d)$ consisting of functions of the form $f(x_1,\ldots, x_d)=F(x_1+\cdots + x_d)$ for some one-variable function $F\in H^p_A(\T)$, we have 
the improved estimate
\[\sup_{\substack{\|f\|_{L^p(\T^d)}\leq1\\ f \in H_{A, \mathrm{diag}}^p(\T^d)}}  \|S_{\mathbb{T}^d}(f)\|_p\sim (p-1)^{-1}, \quad p\to 1^+.\] 
This follows from invariance of the $L^p$-norm and Fubini's theorem which allow us to reduce to the one-dimensional case. On the other hand, the natural inclusion of $H^p_A(\T^k)$ in $H^p_A(\T^d)$  
yields examples of subspaces with sharp blowup of order $(p-1)^{-k}$ for any $k=1,\ldots, d-1$.

Both the original proof of Pichorides's theorem and the extension in this paper rely on complex-analytic techniques, via canonical factorisation in \cite{Pichorides} and conjugate functions in \cite{Pavlovic}. However, a complex-analytic structure is not necessary in order for an estimate of the form in Corollary \ref{Pichorides_extension} to hold. For instance, the same conclusion remains valid for $g\in \tilde{H}^p_0(\T^d)$, the subset of $L^p(\T^d)$ consisting of functions with $\mathrm{supp}(\hat{f})\subset (-\mathbb{N})^d$. Moreover, for functions of the form $f+g$, 
where $f \in H^p_A(\T^d)$ and $g \in \tilde{H}^p_0(\T^d)$ with $\|f\|_{p}=\|g\|_{p}\leq 1/2$, we then have $\|f+g\|_{p}\leq 1$ and 
$\|S_{\mathbb{T}^n}(f+g)\|_{p}\leq  \|S_{\mathbb{T}^d}(f)\|_{p}+\|S_{\mathbb{T}^d}(g)\|_p\lesssim (p-1)^{-d}$ as $p\to 1^+$.
\end{rmk}

\subsection{A transference theorem}\label{transference}
In this subsection, we explain how one can transfer the aforementioned result of Tao and Wright on boundedness of Marcinkiewicz multiplier operators from $H^1 (\R)$ to $L^{1,\infty} (\R)$ to the periodic setting. To this end, let us first recall the definition of the local Hardy space $h^1 (\R)$ introduced by D. Goldberg \cite{Goldberg}, which can be described as the space of $L^1$-functions for which the ``high-frequency''part  belongs to $H^1 (\R)$. Namely, if we take $\phi $ to be a smooth function supported in $ [-1,1]$ and such that $\phi |_{[-1/2, 1/2]} \equiv 1$, and set $\psi = 1-\phi$, one has that $f\in h^1 (\R)$ if, and only if, 
$$	 \| f \|_{h^1 (\R)}=\| f \|_{L^1 (\R)}+\|T_{\psi}  ( f ) \|_{H^1 (\R)}<+\infty. $$
The desired transference result is a consequence of D. Chen's \cite[Thm. 29]{Chen}. 

\begin{thm}\label{transference_Hardy}
If $\lambda $ is a continuous and bounded function on $\R$ such that
$$ \| T_{\lambda} (f) \|_{L^{1,\infty} (\R)} \leq C_1 \| f \|_{h^1 (\R)}, $$
then
$$  \| T_{\lambda |_{\Z}} (g) \|_{L^{1,\infty} (\T)} \leq C_2 \| g \|_{H^1 (\T)}. $$
\end{thm}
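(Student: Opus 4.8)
The plan is to transfer the real-line weak-type estimate for $T_\lambda$ on $h^1(\R)$ to the periodic weak-type estimate for $T_{\lambda|_\Z}$ on $H^1(\T)$ by invoking D.\ Chen's transference theorem \cite[Thm.\ 29]{Chen}. First I would recall precisely what Chen's theorem provides: for a bounded continuous symbol $\lambda$ on $\R$, boundedness of $T_\lambda$ from $h^1(\R)$ to $L^{1,\infty}(\R)$ implies boundedness of the periodic operator $T_{\lambda|_\Z}$ from $h^1(\T)$ (the periodic local Hardy space) to $L^{1,\infty}(\T)$, with control of the constants. This reduces the statement to two purely one-sided comparisons: (i) $L^{1,\infty}(\R)$-boundedness of $T_\lambda$ on $h^1(\R)$ follows from the hypothesis; and (ii) the periodic output $\|T_{\lambda|_\Z}(g)\|_{L^{1,\infty}(\T)} \lesssim \|g\|_{h^1(\T)}$ can be upgraded, on the input side, from $h^1(\T)$ to $H^1(\T)$.

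Step (ii) is the heart of the matter and I expect it to be the main (though not severe) obstacle. The key point is that $H^1(\T) \subset h^1(\T)$ with $\|g\|_{h^1(\T)} \lesssim \|g\|_{H^1(\T)}$: indeed, the periodic local Hardy space is characterised by requiring only the ``high-frequency'' part of $g$ to lie in $H^1(\T)$ (via a cutoff analogous to $\psi = 1 - \phi$ on the frequency side), while the low-frequency part, being a fixed-bandwidth trigonometric polynomial multiple of $g$, is automatically controlled in $L^1$ — in fact in any $L^p$ — by $\|g\|_{L^1(\T)} \leq \|g\|_{H^1(\T)}$. So one just needs to check that the frequency cutoff defining $h^1(\T)$ interacts well with the Hilbert transform, which amounts to the (elementary) boundedness on $H^1(\T)$ of convolution against the smooth low-frequency kernel plus the fact that $T_\phi$ composed with $H_\T$ differs from $H_\T$ composed with $T_\phi$ by a smoothing operator. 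Concatenating this embedding with the conclusion of Chen's theorem yields $\|T_{\lambda|_\Z}(g)\|_{L^{1,\infty}(\T)} \lesssim \|g\|_{h^1(\T)} \lesssim \|g\|_{H^1(\T)}$, which is exactly the claim.

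A subtlety to be careful about in step (i) is matching hypotheses: Chen's transference is stated for a symbol $\lambda$ that is continuous and bounded on $\R$, which is precisely what we assume, but one should verify that the local Hardy space $h^1(\R)$ in our hypothesis (defined via the cutoff $\psi = 1-\phi$ with $\phi$ smooth, supported in $[-1,1]$, equal to $1$ on $[-1/2,1/2]$) coincides, up to equivalence of norms, with the version of $h^1(\R)$ used in \cite{Chen}; this is standard since different admissible choices of $\phi$ give equivalent norms on $h^1(\R)$ (see \cite{Goldberg}). With that identification in place, the hypothesis $\|T_\lambda(f)\|_{L^{1,\infty}(\R)} \leq C_1 \|f\|_{h^1(\R)}$ is exactly the input Chen's theorem requires. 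I would then note explicitly how this applies in our setting: a periodic Marcinkiewicz multiplier $m$ on $\Z$ extends (by standard arguments, e.g.\ linear interpolation between consecutive integers, or any extension preserving the dyadic derivative bound) to a Marcinkiewicz multiplier $\lambda$ on $\R$ with $\|\lambda\|_{L^\infty(\R)} + A_\lambda \lesssim \|m\|_{\ell^\infty(\Z)} + B_m$ and $\lambda|_\Z = m$, so that the Tao--Wright bound \eqref{TW_ineq} gives $H^1(\R) \to L^{1,\infty}(\R)$ boundedness of $T_\lambda$, hence (since $\|f\|_{H^1(\R)} \lesssim \|f\|_{h^1(\R)}$ is false, but rather $T_\lambda$ on $h^1$ follows from splitting $f$ into high- and low-frequency parts, the low part being handled by the $L^2$-boundedness of the Marcinkiewicz operator) the $h^1(\R) \to L^{1,\infty}(\R)$ bound needed to feed into Theorem~\ref{transference_Hardy}, yielding \eqref{TW_periodic} with $D_m$ depending only on $\|m\|_{\ell^\infty(\Z)} + B_m$.
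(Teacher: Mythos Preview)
Your proposal is correct and matches the paper's approach: the paper offers no proof of the theorem beyond the sentence ``The desired transference result is a consequence of D.\ Chen's [Thm.\ 29],'' and your plan is precisely to invoke Chen's result and then pass from $h^1(\T)$ to $H^1(\T)$ on the periodic side. Your step~(ii) is in fact trivial, since on the torus the low-frequency part of any $g\in L^1(\T)$ is a fixed-degree trigonometric polynomial and hence $h^1(\T)=H^1(\T)$ with equivalent norms; the paper does not spell this out, presumably for this reason.

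Your third paragraph is not part of the proof of the theorem but rather of its application to obtain \eqref{TW_periodic} from \eqref{TW_ineq}. There the paper proceeds slightly differently from what you sketch: rather than splitting $f$ into low- and high-frequency parts, the paper splits the \emph{multiplier}, writing $\lambda_+=\psi\lambda$ and observing that $T_{\lambda_+}(f)=T_\lambda(T_\psi f)$, so that the Tao--Wright bound gives $\|T_{\lambda_+}(f)\|_{L^{1,\infty}(\R)}\lesssim \|T_\psi f\|_{H^1(\R)}\lesssim\|f\|_{h^1(\R)}$ directly; then the theorem is applied to $\lambda_+$, and the remaining piece $T_m-T_{\lambda_+|_\Z}$ on $\T$ is just $T_0(g)=m(0)\widehat g(0)$, which is trivially bounded. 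Your route (splitting $f$ and handling the compactly-supported-spectrum part via $L^2$-boundedness and the embedding $L^2\hookrightarrow L^{1,\infty}$ on sets of finite measure, or more carefully via a convolution estimate) also works, but the paper's version is cleaner because it avoids any discussion of how the Marcinkiewicz operator acts on low frequencies on $\R$.
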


Observe that given a Marcinkiewicz multiplier $m$ on the torus, one can construct a Marcinkiewicz multiplier $\lambda$ on $\R$ such that $\lambda|_{\Z} = m$. Indeed, it suffices to take $\lambda$ to be continuous such that $\lambda (n) = m(n)$ for every $n \in \Z$ and affine on the intervals of the form $(n,n+1)$, $n \in \Z$.

In order to use Theorem \ref{transference_Hardy}, let $\psi$ be as above and consider the ``high-frequency'' part $\lambda_+$ of $\lambda$ given by 
$\lambda_+ =  \psi \lambda .$
Note that for every Schwartz function $f$ we may write 
$$T_{\lambda_+} (f) = T_{\lambda} (\widetilde{f}) ,$$
where $\widetilde{f} = T_{\psi}  (f) $.  We thus deduce that
$$ \| T_{\lambda_+} (f) \|_{L^{1,\infty} (\R)} = \| T_{\lambda } (\widetilde{f}) \|_{L^{1,\infty} (\R)}  \lesssim \| \widetilde{f} \|_{H^1 (\R)} \lesssim \| f \|_{h^1 (\R)} ,$$
and hence, Theorem \ref{transference_Hardy} yields that  $ T_{\lambda_+|_{\Z}} $ is bounded from $H^1 (\T)$ to $L^{1,\infty} (\T)$. Since for every trigonometric polynomial $g$ we can write
$$ T_{m } (g)  = T_0 (g)  + T_{\lambda_+|_{\Z}} (g)  ,$$
where $T_0 (g) =m(0)\widehat{g} (0)$, and we have that
$$ \| T_0 (g) \|_{L^{1,\infty} (\T)} = |m(0)||\widehat{g} (0)| \leq \| m\|_{\ell^{\infty} (\Z)} \| g\|_{L^1 (\T)}  \leq \| m\|_{\ell^{\infty} (\Z)} \| g\|_{H^1 (\T)},$$
it follows that
$\| T_{m } (g) \|_{L^{1, \infty} (\T)} \lesssim \| g\|_{H^1 (\T)} .$

\section{A higher-dimensional extension of an inequality due to Zygmund}\label{Application}

In \cite{Zygmund}, Zygmund showed that there exists a constant $C > 0$ such that for every $f\in H^1_A(\T)$, we have
\begin{equation}\label{classical_Zygmund} 
\| S_{\T} (f) \|_{L^1 (\T)} \leq C \| f \|_{L  \log L (\T) } .
\end{equation}
Note that if one removes the assumption that $f \in H^1_A (\T)$, then the Orlicz space $L \log L (\T)$ must be replaced by the smaller space $L \log^{3/2} L (\T)$, see \cite{Bakas}. 

Zygmund's proof again relies on canonical factorisation in $H^p_A(\T)$, but a higher-dimensional extension of (\ref{classical_Zygmund}) can now be obtained from the methods of the previous section.

\begin{proposition}\label{Zygmund_extension}
Given $d \in \N$, there exists a constant $C_d>0$ such that for every analytic trigonometric polynomial $g $ on $\T^d$ one has
\begin{equation}\label{n_Zygmund}
\| S_{\T^d} (g) \|_{L^1 (\T^d)} \leq C_d \| g \|_{L \log^d L (\T^d)}.
\end{equation}
The exponent $r=d$ in the Orlicz space $L \log^d L (\T^d)$ cannot be improved.
\end{proposition}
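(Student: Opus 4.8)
\textbf{Proof strategy for Proposition \ref{Zygmund_extension}.}

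The plan is to mimic the distributional-inequality argument used to prove \eqref{Marcinkiewicz_norm}, but now splitting at a single level and summing over dyadic scales so that the $L^p$-improvement $(p-1)^{-1}$ per variable is converted, via the endpoint $L\log^d L$ bound, into the exponent $d$. First I would reduce, by Khintchine's inequality \eqref{multi_Khintchine} exactly as in Subsection \ref{Corollary_of_Thm}, the estimate for $S_{\T^d}$ to an estimate for the $d$-fold tensor products $T_{m_1}\otimes\cdots\otimes T_{m_d}$ of the randomised square-function multipliers $\sum_{k}\pm\Delta_k$; each such $m_j$ is a periodic Marcinkiewicz multiplier with $B_{m_j}\leq 2$, so \eqref{TW_analytic} applies with a uniform constant. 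Thus it suffices to show $\|(T_{m_1}\otimes\cdots\otimes T_{m_d})(g)\|_{L^1(\T^d)}\lesssim_d \|g\|_{L\log^d L(\T^d)}$ for analytic trigonometric polynomials $g$.

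The core is the one-dimensional endpoint statement: for a periodic Marcinkiewicz multiplier $T_m$ and $f\in H^1_A(\T)$,
\begin{equation}\label{one_dim_endpoint}
\|T_m(f)\|_{L^1(\T)}\lesssim \int_{\T}|f(x)|\log\big(1+|f(x)|/\|f\|_{L^1(\T)}\big)\,dx + \|f\|_{L^1(\T)}.
\end{equation}
To prove this I would write $\|T_m(f)\|_{L^1}=\int_0^\infty |\{|T_m(f)|>\lambda\}|\,d\lambda$, and for each $\lambda$ apply Lemma \ref{KX_P} with $p_0=1$ to split $f=h_\lambda+g_\lambda$. For the ``good'' part $g_\lambda$ I would use \eqref{TW_analytic}, i.e.\ the $H^1_A(\T)\to L^{1,\infty}(\T)$ bound, which costs $\lambda^{-1}\int_{\{|f|>\lambda\}}|f|$; integrating in $\lambda$ and applying Fubini produces $\int_{\T}|f|\log(1+|f|)\,dx$ (roughly the distance from $|f|$ to $1$ on a logarithmic scale). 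For the ``bad'' part $h_\lambda\in H^\infty_A(\T)$ I would use the $L^2(\T)\to L^2(\T)$ bound of $T_m$ together with the pointwise control $|h_\lambda(x)|\leq C\min\{|f(x)|,\lambda^2/|f(x)|\}$; the resulting $\lambda^{-2}\int |h_\lambda|^2$ again integrates (after Fubini) to a constant multiple of $\int_{\T}|f|\log(1+|f|)\,dx+\|f\|_{L^1}$. Normalising $f$ so that $\|f\|_{L^1(\T)}=1$ gives \eqref{one_dim_endpoint} in the clean form $\|T_m(f)\|_{L^1}\lesssim\|f\|_{L\log L(\T)}$.

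The $d$-dimensional case then follows by iteration in the spirit of the proof of Theorem \ref{Main_Theorem}. Freezing the first $d-1$ variables, one has $(T_{m_1}\otimes\cdots\otimes T_{m_d})(g)(x)=T_{m_d}(G_{(x_1,\dots,x_{d-1})})(x_d)$ with $G_{(x_1,\dots,x_{d-1})}=(T_{m_1}\otimes\cdots\otimes T_{m_{d-1}})(g_{(x_1,\dots,x_{d-1})})$, which is analytic in $x_d$; applying \eqref{one_dim_endpoint} in the $x_d$-variable and then integrating over $\T^{d-1}$ reduces $L\log^d L$ control to $L\log^{d-1}L$ control of $T_{m_1}\otimes\cdots\otimes T_{m_{d-1}}$, provided one tracks the nesting of the Orlicz norms correctly. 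The clean way to organise this is to invoke the known tensor-power/iteration characterisation of $L\log^d L(\T^d)$ in terms of iterated $L\log L$ norms in one variable at a time (as in \cite{Bakas}), so that $d$ applications of the one-variable inequality multiply out to the exponent $d$; I expect the bookkeeping of constants and the passage between the integral and norm formulations of the Orlicz norms (including the inhomogeneous $+\|f\|_{L^1}$ terms, which must be absorbed at each stage) to be the main technical obstacle, rather than any deep new idea.

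For sharpness, i.e.\ that $r=d$ cannot be lowered, I would argue by contradiction: if \eqref{n_Zygmund} held with $L\log^{d-\varepsilon}L(\T^d)$ for some $\varepsilon>0$, then testing on tensor products $g(x_1,\dots,x_d)=f(x_1)\cdots f(x_d)$ with $f\in H^1_A(\T)$ would, via Fubini and the product structure of $S_{\T^d}$, force a one-dimensional inequality $\|S_\T(f)\|_{L^1(\T)}^d\lesssim\|f\|_{L\log^{(d-\varepsilon)/d}L(\T)}^d$ — in particular a one-variable bound with Orlicz exponent strictly below $1$ — contradicting the known failure of \eqref{classical_Zygmund} with any $L\log^r L$ for $r<1$ on $H^1_A(\T)$ (the extremising lacunary-type examples from \cite{Pichorides} and \cite{Zygmund} already rule this out). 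Alternatively, and perhaps more transparently, one directly uses a $d$-fold tensor power of the one-variable extremiser: if $f_N$ are analytic polynomials with $\|S_\T(f_N)\|_{L^1}/\|f_N\|_{L\log^{1-\varepsilon}L}\to\infty$, then $g_N=f_N\otimes\cdots\otimes f_N$ witnesses $\|S_{\T^d}(g_N)\|_{L^1}/\|g_N\|_{L\log^{d-d\varepsilon}L}\to\infty$, using that the Orlicz norm of a tensor power behaves like the $d$-th power up to constants depending on $d$. This pins down $r=d$ as optimal.
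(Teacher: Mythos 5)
Your proposal follows the paper's broad outline (reduce via Khintchine \eqref{multi_Khintchine} to a tensor product of randomised Marcinkiewicz multipliers, decompose via Lemma \ref{KX_P}, iterate variable by variable, test tensor powers for sharpness), but there is a genuine gap in the iteration step that cannot be dismissed as bookkeeping. Proving only the $r=0$ endpoint $\|T_m(f)\|_{L^1(\T)}\lesssim \|f\|_{L\log L(\T)}$ does not permit the inductive reduction from $d$ variables to $d-1$. To see the obstruction, take $d=2$: writing $(T_{m_1}\otimes T_{m_2})(g)(x_1,x_2)=T_{m_2}(G_{x_1})(x_2)$ with $G=(T_{m_1}\otimes I)(g)$ and applying your one-dimensional bound in $x_2$, you arrive (after integrating in $x_1$) at needing to control
\[
\int_{\T}\int_{\T}|T_{m_1}(g(\cdot,x_2))(x_1)|\,\log\bigl(1+|T_{m_1}(g(\cdot,x_2))(x_1)|\bigr)\,dx_1\,dx_2,
\]
and the inner integral is a weighted $L\log L$ quantity of the \emph{output} of $T_{m_1}$, not an $L^1$ quantity. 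Your $r=0$ endpoint gives you no handle on it. What is required is the full scale of inequalities
\[
\int_{\T}|T(f)|\log^r(1+|T(f)|)\,dx\;\lesssim_r\;1+\int_{\T}|f|\log^{r+1}(1+|f|)\,dx,\qquad r\geq 0,
\]
which is exactly the paper's \eqref{LlogL_interpolation}; the $r=1$ case is what you need at the second iteration, $r=2$ at the third, and so on. This $r$-family is derived by the same Kislyakov--Xu decomposition but with the weights $\log^r$ carried through the distributional argument, and it is the real content of the proof, not a nesting formality. The vague appeal to an ``iterated $L\log L$'' characterisation of $L\log^d L(\T^d)$ does not rescue the argument, because the intermediate-stage mapping property for $T_{m_j}$ is precisely \eqref{LlogL_interpolation}, which you have not established.

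Your sharpness strategy (tensor powers of one-variable extremisers) is correct in spirit and matches the paper, but is left abstract. The paper exhibits explicit extremisers: $f_N(x)=e^{i2\pi 2^{N+1}x}V_{2^N}(x)$ with $V_{2^N}$ the de la Vall\'ee Poussin kernel, for which $\Delta_{N+1}(f_N)$ is a Dirichlet-type block of size $2^N$, so $\|S_\T(f_N)\|_{L^1}\gtrsim N$ while $\|f_N\|_{L\log^r L(\T)}\lesssim N^r$; taking $g_N=f_N\otimes\cdots\otimes f_N$ then yields $N^d\lesssim \|S_{\T^d}(g_N)\|_{L^1}\lesssim N^r$, forcing $r\geq d$. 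You should carry out this construction (or another concrete one) rather than rely on the assertion that Orlicz norms of tensor powers multiply, which in any case requires justification.
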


\begin{proof}
By using Lemma $\ref{KX_P}$ and a Marcinkiewicz-type interpolation argument analogous to the one presented in Section \ref{Proof_of_Thm}, one shows that if $T$ is a sublinear operator that is bounded from $H^1_A (\T)$ to $L^{1,\infty} (\T)$ and bounded from $H^2_A (\T)$ to $L^2 (\T)$, then for every $r \geq 0$ one has
\begin{equation}\label{LlogL_interpolation}
\int_{\T} |T (f) (x)| \log^r (1 + |T(f) (x)|) dx \leq C_r [1 + \int_{\T} |f(x)| \log^{r+1} (1+ |f(x)|)dx ] 
\end{equation}
for every analytic trigonometric polynomial $f$ on $\T$, where $C_r>0$ is a constant depending only on $r$. 

If $T_{\omega_j} = \sum_{k \in \Z} r_k (\omega_j) \Delta_j$ denotes a randomised version of $S_{\T}$, $j=1, \cdots, d$, then $T_{\omega_j}$ maps  $H^1_A (\T)$ to $L^{1,\infty} (\T)$ and  $H^2_A (\T)$ to $L^2 (\T)$ and so, by using (\ref{LlogL_interpolation}) and iteration, one deduces that 
\begin{equation}\label{lin_Zyg}
 \| ( T_{\omega_1} \otimes \cdots \otimes T_{\omega_d} )  (f)  \|_{ L^1 (\T^d)} \leq A_d \| f \|_{L \log^d L (\T^d)}  
 \end{equation}
 for every analytic polynomial $f$ on $\T^d$. Hence, the proof of (\ref{n_Zygmund}) is obtained by using (\ref{lin_Zyg}) and \eqref{multi_Khintchine}.

To prove sharpness for $d=1$, let $N$ be a large positive integer to be chosen later and take $V_{2^N} = 2 K_{2^{N+1}} -K_{2^N}$ to be the de la Vall\'ee Poussin kernel of order $2^N$, where $K_n$ denotes the Fej\'er kernel of order $n$, $K_n (x) = \sum_{|j| \leq n } [1- |j|/(n+1) ] e^{i 2 \pi j x}$. Consider the function $f_N$ by
$$ f_N (x) = e^{i2\pi 2^{N+1} x} V_{2^N} (x). $$
Then, one can easily check that  $f_N \in H^1_A (\T)$, $\Delta_{N+1} (f_N) (x) = \sum_{k=2^N}^{2^{N+1}-1} e^{i 2 \pi k x}$ and $\| f_N \|_{L \log^r L (\T)} \lesssim N^r$. Hence, if we assume that (\ref{classical_Zygmund}) holds for some $L \log^r L (\T)$, then we see that we must have
$$ N \lesssim \| \Delta_{N+1} (f_N) \|_{L^1 (\T)} \leq \| S_{\T} (f_N) \|_{L^1 (\T)} \lesssim \| f_N \|_{L \log^r L (\T)} \lesssim N^r$$
and so, if $N$ is large enough, it follows that $r \geq 1$, as desired. 
To prove sharpness in the $d$-dimensional case, take  $g_N (x_1, \cdots, x_d) = f_N (x_1) \cdots f_N (x_d) $, $f_N$ being as above, and note that
$$ N^d \lesssim \| \Delta_{N+1} (f_N) \|^d_{L^1 (\T)} \leq \|S_{\T^d} (g_N) \|_{L^1 (\T^d)} \lesssim \| g_N \|_{L \log^r L (\T^d)} \lesssim N^r.$$
Hence, by taking $N \rightarrow \infty$, we deduce that $r \geq d$.
\end{proof}

\begin{rmk}
Note that, by using (\ref{LlogL_interpolation}) and  \eqref{multi_Khintchine}, one can actually show that there exists a constant $B_d>0$, depending only on $d$, such that
\begin{equation}\label{weak_type}
 \| S_{\T^d} (f) \|_{L^{1,\infty} (\T^d)} \leq B_d \| f \|_{L \log^{d-1} L (\T^d)} 
 \end{equation}
for every analytic trigonometric polynomial $f$ on $\T^d$. Notice that if we remove the assumption that $f$ is analytic, then the Orlicz space $L \log^{d-1} L (\T^d)$ in (\ref{weak_type}) must be replaced by  $L \log^{3d/2 -1} L (\T^d)$, see \cite{Bakas}.
\end{rmk}

\section{Euclidean variants of Theorem \ref{Main_Theorem}}\label{Euclidean_variants}
In this section we obtain an extension of Pichorides's theorem to the Euclidean setting. Our result will be a consequence of the following variant of Marcinkiewicz-type interpolation on Hardy spaces.

\begin{proposition}\label{PJ_interpolation}
Assume that $T$ is a sublinear operator that satisfies:
\begin{itemize}
\item $ \| T (f) \|_{L^{1,\infty} (\R)} \leq C \| f \|_{L^1 (\R)}  $ for all $f \in H^1_A (\R)$ and
\item $ \| T (f) \|_{L^2 (\R)} \leq C \| f \|_{L^2 (\R)}  $ for all $f \in H^2_A (\R)$,
\end{itemize}
where $C>0$ is an absolute constant.  Then, for every $1<p<2$, $T$ maps $H^p_A (\R)$ to $L^p (\R)$ and moreover,
$$  \| T \|_{H^p_A (\R) \rightarrow L^p (\R)} \lesssim [  (p-1)^{-1} + (2-p)^{-1} ]^{1/p}.$$
\end{proposition}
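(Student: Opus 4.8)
The plan is to imitate the argument used to establish the periodic estimate (\ref{Marcinkiewicz_norm}) in Section \ref{Proof_of_Thm}, with the Kislyakov--Xu decomposition (Lemma \ref{KX_P}) replaced by its Euclidean analogue, namely Peter Jones's Marcinkiewicz-type decomposition for analytic Hardy spaces on the line \cite{PeterJones}. Concretely, I would first record the following statement: for $f \in H^1_A(\R)$ and $\lambda > 0$ there exist $h_\lambda \in H^\infty_A(\R)$ and $g_\lambda \in H^1_A(\R)$ with $f = h_\lambda + g_\lambda$,
$$ |h_\lambda(x)| \lesssim \lambda \min\{\lambda^{-1}|f(x)|,\, |f(x)|^{-1}\lambda\} \quad (x \in \R), \qquad \|g_\lambda\|_{L^1(\R)} \lesssim \int_{\{|f|>\lambda\}} |f(x)|\,dx, $$
with absolute implied constants. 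The pointwise bound on $h_\lambda$ forces $h_\lambda \in L^2(\R)$, which together with its analyticity places it in $H^2_A(\R)$ with $\|h_\lambda\|_{L^2(\R)}^2 \lesssim \int_{\{|f|\leq\lambda\}}|f|^2 + \lambda^4\int_{\{|f|>\lambda\}}|f|^{-2}$. (This last point is where the Euclidean case genuinely differs from the torus: on $\T$ one has $H^\infty_A(\T) \subset H^2_A(\T)$ for free, whereas on $\R$ one must use the size control on $h_\lambda$.)

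Next I would fix $1 < p < 2$ and take $f$ in a dense subclass of $H^p_A(\R)$ on which all manipulations are legitimate — functions with Fourier transform in $C_c^\infty((0,\infty))$ work, and sit inside every $H^q_A(\R)$. Writing
$$ \|T(f)\|_{L^p(\R)}^p = p\int_0^\infty \lambda^{p-1}\,|\{x \in \R : |T(f)(x)| > \lambda\}|\,d\lambda $$
and applying the decomposition at height $\lambda$, sublinearity gives $\{|T(f)|>\lambda\} \subset \{|T(g_\lambda)|>\lambda/2\} \cup \{|T(h_\lambda)|>\lambda/2\}$, hence $\|T(f)\|_{L^p}^p \leq I_1 + I_2$ with
$$ I_1 = p\int_0^\infty \lambda^{p-1}|\{|T(g_\lambda)|>\lambda/2\}|\,d\lambda, \qquad I_2 = p\int_0^\infty \lambda^{p-1}|\{|T(h_\lambda)|>\lambda/2\}|\,d\lambda. $$

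For $I_1$ I would use the first hypothesis: $|\{|T(g_\lambda)|>\lambda/2\}| \lesssim \lambda^{-1}\|g_\lambda\|_{L^1} \lesssim \lambda^{-1}\int_{\{|f|>\lambda\}}|f|$, and Fubini's theorem then yields $I_1 \lesssim (p-1)^{-1}\|f\|_{L^p}^p$, exactly as in Section \ref{Proof_of_Thm}. For $I_2$ I would use the $H^2_A \to L^2$ hypothesis and Chebyshev's inequality, $|\{|T(h_\lambda)|>\lambda/2\}| \lesssim \lambda^{-2}\|h_\lambda\|_{L^2}^2$, insert the size bound for $h_\lambda$, split into its two pieces, and apply Fubini to each; this gives $I_2 \lesssim [(2-p)^{-1} + (p+2)^{-1}]\|f\|_{L^p}^p \lesssim (2-p)^{-1}\|f\|_{L^p}^p$, where convergence of the $\lambda$-integral at infinity uses $p < 2$. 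Combining, $\|T(f)\|_{L^p}^p \lesssim [(p-1)^{-1} + (2-p)^{-1}]\|f\|_{L^p}^p$ on the dense subclass, and the asserted bound on $\|T\|_{H^p_A(\R) \to L^p(\R)}$ follows by density. The main obstacle is the first step: extracting from \cite{PeterJones} a decomposition of $H^1_A(\R)$ in precisely the above form, with the pointwise control on the bounded part and the $L^1$ control on the remainder and absolute constants; once that is in hand, the interpolation is a routine transcription of the periodic argument.
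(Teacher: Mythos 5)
Your strategy (Marcinkiewicz-type interpolation via an analytic decomposition at height $\lambda$, with Peter Jones's result replacing Kislyakov--Xu) is the right one, and it is indeed what the paper does. But the gap you flag at the end is a genuine gap, not a formality, and the interpolation that follows is \emph{not} a routine transcription of the periodic argument, for the following reason. Jones's Theorem 2 in \cite{PeterJones} does not give the Kislyakov--Xu-type pointwise bound $|h_\lambda(x)| \lesssim \lambda \min\{\lambda^{-1}|f(x)|,\, |f(x)|^{-1}\lambda\}$ that your outline relies on. What it gives is a decomposition $f = F_\lambda + f_\lambda$ with $F_\lambda \in H^1_A(\R)$, $f_\lambda \in H^\infty_A(\R)$, $\|f_\lambda\|_{L^\infty(\R)} \leq C_0\lambda$, and $L^1$-control of $F_\lambda$ in terms of the \emph{non-tangential maximal function}: $\int_\R |F_\lambda| \leq C_0 \int_{\{N(f)>\lambda\}} N(f)$. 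There is no pointwise bound on $f_\lambda$ by $|f|$, and the $L^1$ piece is not controlled by $|f|$ on a superlevel set of $|f|$ but by $N(f)$ on a superlevel set of $N(f)$.

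This changes the bookkeeping in essential ways. The level set split you propose, $\{|f|\leq\lambda\}$ versus $\{|f|>\lambda\}$, must be replaced throughout by $\{N(f)\leq\lambda\}$ versus $\{N(f)>\lambda\}$. The $L^2$-norm of the bounded piece $f_\lambda$ cannot be estimated directly by the two-term expression you write; instead, over $\{N(f)>\lambda\}$ one simply uses $\|f_\lambda\|_\infty \lesssim \lambda$, while over $\{N(f)\leq\lambda\}$ one needs the inequality $|f_\lambda|^2 \leq 2|f|^2 + 2|F_\lambda|^2$ together with the observation that on $\{N(f)\leq\lambda\}$ one has $|F_\lambda| \leq |f| + |f_\lambda| \leq N(f) + C_0\lambda \leq (1+C_0)\lambda$, which feeds $\int_{\{N(f)\leq\lambda\}} |F_\lambda|^2 \lesssim \lambda \int_\R |F_\lambda|$ back into Jones's $L^1$ estimate. (The term involving $|f|^2$ on $\{N(f)\leq\lambda\}$ uses $|f| \leq N(f)$ a.e.; this is the only place a pointwise comparison between $f$ and the decomposition data enters.) Finally, all of this delivers $\|T(f)\|_{L^p(\R)} \lesssim [(p-1)^{-1}+(2-p)^{-1}]^{1/p}\|N(f)\|_{L^p(\R)}$, and you still need a last step that your outline omits: the bound $\|N(f)\|_{L^p(\R)} \leq C_p \|f\|_{L^p(\R)}$ for $f \in H^p_A(\R)$, with $C_p$ uniformly bounded for $1<p<2$ (one can take $C_p = A_0^{1/p}$ with $A_0$ absolute). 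Without tracking that this constant does not blow up, the final estimate is not justified.

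In short, the Euclidean analogue of Kislyakov--Xu that you want to ``extract'' is not what Jones proved, and the correct proof genuinely works with $N(f)$ throughout; the $I_{2,\alpha}/I_{2,\beta}$-type splitting, the $|f_\lambda|^2 \leq 2|f|^2 + 2|F_\lambda|^2$ device, and the final maximal-function inequality with uniform constant are the new ingredients needed beyond the periodic case.
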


\begin{proof}
Fix $1<p<2$ and take an $f \in H^p_A (\R)$. From a classical result due to Peter Jones \cite[Theorem 2]{PeterJones} it follows that for every $\lambda>0$ one can write $f = F_{\lambda} + f_{\lambda}$, where $F_{\lambda} \in H^1_A (\R)$, $f_{\lambda} \in H^{\infty}_A (\R)$ and, moreover, there is an absolute constant $C_0 >0$ such that
\begin{itemize}
\item $ \int_{\R} |F_{\lambda} (x) | dx \leq C_0 \int_{ \{ x \in \R:  N(f) (x) > \lambda \} } N(f) (x) dx $ and
\item $\| f_{\lambda} \|_{L^{\infty} (\R)} \leq C_0 \lambda$.
\end{itemize}
Here, $N(f)$ denotes the non-tangential maximal function of $f \in H^p_A (\R)$ given by 
$$ N(f) (x) = \sup_{|x-x'|<t} |(f \ast P_t )(x')|, $$
where, for $t>0$, $P_t(s)=t/(s^2+t^2)$ denotes the Poisson kernel on the real line. Hence, by using the  Peter Jones decomposition of $f$, we have
$$ \| T(f) \|^p_{L^p (\R)} = \int_0^{\infty} p \lambda^{p-1} | \{ x\in \R :  |T(f) (x)| > \lambda/2\}| d \lambda \leq I_1 + I_2 ,$$
where
$$ I_1 = p \int_0^{\infty} \lambda^{p-1} | \{ x\in \R :  |T(F_{\lambda}) (x)| > \lambda/2\}| d \lambda $$
and
$$ I_2 = p \int_0^{\infty} \lambda^{p-1} | \{ x \in \R: |T(f_{\lambda}) (x)| > \lambda/2\}|  d \lambda .$$
We shall treat $I_1$ and $I_2$ separately. To bound $I_1$,  using our assumption on the boundedness of $T$ from $H^1_A(\R)$ to $L^{1,\infty} (\R)$ together with Fubini's theorem, we deduce that there is an absolute constant $C_1 >0$ such that
\begin{equation}\label{est_1}
I_1 \leq  C_1 (p-1)^{-1} \int_{\R} [N(f) (x)]^p dx.
\end{equation}
To bound the second term, we first use the boundedness of $T$ from $H^2_A(\R)$ to $L^2 (\R)$ as follows
$$ I_2 \leq C  \int_0^{\infty}p  \lambda^{p-3} \big( \int_{\R} | f_{\lambda} (x) |^2 dx \big) d \lambda $$
and then we further decompose the right-hand side of the last inequality as $I_{2 ,\alpha} + I_{2 ,\beta}$, where
$$ I_{2, \alpha}  = C   \int_0^{\infty} p  \lambda^{p-3} \big( \int_{ \{ x \in \R:  N(f) (x) > \lambda \}} | f_{\lambda} (x) |^2 dx \big) d \lambda $$
and
$$ I_{2, \beta}  = C  \int_0^{\infty}p  \lambda^{p-3} \big( \int_{ \{ x \in \R:  N(f) (x) \leq \lambda \}} | f_{\lambda} (x) |^2 dx \big) d \lambda. $$
The first term $I_{2, \alpha}$ can easily be dealt with by using the fact that $\| f_{\lambda} \|_{L^{\infty} (\R)} \leq C_0 \lambda$,
$$
I_{2, \alpha} \leq  C'  \int_0^{\infty} p  \lambda^{p-1} | \{ x \in \R : N(f)(x) >  \lambda\} | d \lambda = C' \int_{\R} [N(f) (x)]^p dx ,
$$
where $C' =C_0 C$. 
To obtain appropriate bounds for $I_{2, \beta}$, note that since $ |f_{\lambda}|^2 = |f - F_{\lambda}|^2 \leq 2 |f|^2 + 2 |F_{\lambda}|^2$, one has $I_{2, \beta} \leq I'_{2, \beta} + I''_{2, \beta}$, where
$$ I'_{2, \beta} =  2 C \int_0^{\infty} p \lambda^{p-3} \big( \int_{ \{ x \in \R:   N(f) (x) \leq \lambda \}} |f(x)|^2 d x\big) d \lambda$$
and
$$ I''_{2, \beta} =  2 C \int_0^{\infty} p \lambda^{p-3} \big( \int_{\{ x \in \R:   N(f) (x) \leq \lambda \}} |F_{\lambda}(x)|^2 d x\big) d \lambda  . $$
To handle $I'_{2, \beta} $,  note that since $f\in H^p_A (\R)$ ($1<p<2$) one has $|f (x)| \leq N(f)(x)$ for a.e. $x \in \R$ and hence, by using this fact together with Fubini's theorem, one obtains
$$ I'_{2, \beta} \leq C (2-p)^{-1} \int_{\R} [ N(f) (x) ]^p dx.$$
Finally, for the last term $I''_{2, \beta}$, we note that for a.e. $x$ in $\{ N(f)  \leq \lambda \}$ one has
$$  |F_{\lambda} (x)| \leq |f (x) | + |f_{\lambda} (x)| \leq N(f)(x) + |f_{\lambda} (x)| \leq (1+C_0) \lambda $$
and hence, 
\begin{align*}
I''_{2, \beta}  &\leq C'' \int_0^{\infty} \lambda^{p-2} \big(  \int_{\R} |F_{\lambda} (x)| dx \big) d \lambda \\
&\leq C'' \int_0^{\infty} \lambda^{p-2} \big(  \int_{\{ x \in \R:   N(f) (x) > \lambda \}} N(f) (x) dx \big) d \lambda \leq  C'' (p-1)^{-1} \int_{\R}   [N(f) (x)]^p dx,
\end{align*}
where  $C''= 4(1+C_0) $ and in the last step we used Fubini's theorem.  Since $I_2 \leq I_{2, \alpha}  + I'_{2, \beta}  + I''_{2, \beta} $, we conclude that there is a $C_2>0$ such that
\begin{equation}\label{est_2}
I_2 \leq C_2 [(p-1)^{-1} + (2-p)^{-1}] \int_{\R}   [N(f) (x)]^p dx. 
\end{equation}
It thus follows from (\ref{est_1}) and (\ref{est_2}) that
$$ \| T (f) \|_{L^p (\R)} \lesssim [(p-1)^{-1} + (2-p)^{-1}]^{1/p} \| N(f) \|_{L^p (\R)}.$$
To complete the proof of the proposition note that one has 
\begin{equation}\label{H-L}
\| N(f) \|_{L^p (\R)} \leq C_p \| f \|_{L^p (\R)} \ (f \in H^p_A (\R)),
\end{equation}
where one can take $C_p = A_0^{1/p}$,  $A_0 \geq 1$ being an absolute constant, see e.g. p.278-279 in vol.I in \cite{Zygmund_book}, where the periodic case is presented. The Euclidean version is completely analogous. Hence, if $1<p<2$, one deduces that the constant $C_p$ in (\ref{H-L}) satisfies $C_p \leq A_0$ and so, we get the desired result. \end{proof}

Using the above proposition and iteration, we obtain the following Euclidean version of Theorem \ref{Main_Theorem}.

\begin{thm}\label{Euclidean_Thm}
Let $d \in \N$ be a given dimension. If $T_{m_j}$ is a Marcinkiewicz multiplier operator on $\R$ $(j=1, \cdots, d )$, then 
$$   \|  T_{m_1} \otimes \cdots \otimes T_{m_d}  \|_{H^p_A (\R^d) \rightarrow L^p (\R^d) } \lesssim_{C_{m_1}, \cdots, C_{m_d} } (p-1)^{-d} $$
as $p \rightarrow 1^+$, where $C_{m_j} = \| m_j \|_{L^{\infty} (\R)} + A_{m_j}$, 
$A_{m_j}$ is as in $(\ref{Marcinkiewicz_condition})$, $j=1, \cdots, d$.
\end{thm}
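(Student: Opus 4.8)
The plan is to deduce Theorem~\ref{Euclidean_Thm} from Proposition~\ref{PJ_interpolation} applied separately to each factor $T_{m_j}$, followed by an iteration over the variables, entirely parallel to the proof of Theorem~\ref{Main_Theorem}, with Proposition~\ref{PJ_interpolation} playing the role that Lemma~\ref{KX_P} and the ensuing Marcinkiewicz-type interpolation played in the periodic setting.

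First I would settle the case $d=1$ by checking that each Marcinkiewicz multiplier operator $T_{m_j}$ on $\R$ satisfies the hypotheses of Proposition~\ref{PJ_interpolation}. For the weak-type endpoint, the Tao--Wright inequality \eqref{TW_ineq} gives $\|T_{m_j}(f)\|_{L^{1,\infty}(\R)}\le C_{m_j}\|f\|_{H^1(\R)}$ for all $f\in H^1(\R)$; restricting to $H^1_A(\R)$ and using that the Hilbert transform acts as a multiple of the identity on $H^1_A(\R)$, so that $\|f\|_{H^1(\R)}=2\|f\|_{L^1(\R)}$ there (the Euclidean analogue of the identity recorded in Section~\ref{Hardy_Orlicz} for $\T$), one obtains $\|T_{m_j}(f)\|_{L^{1,\infty}(\R)}\le 2C_{m_j}\|f\|_{L^1(\R)}$ for $f\in H^1_A(\R)$. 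The $L^2$-bound $\|T_{m_j}(f)\|_{L^2(\R)}\le\|m_j\|_{L^{\infty}(\R)}\|f\|_{L^2(\R)}\le C_{m_j}\|f\|_{L^2(\R)}$ is immediate from Plancherel. Proposition~\ref{PJ_interpolation} is stated with an absolute constant, but inspecting its proof shows that the hypothesis constant enters only through finitely many products, so with $2C_{m_j}$ in place of $C$ we obtain $\|T_{m_j}\|_{H^p_A(\R)\to L^p(\R)}\lesssim_{C_{m_j}}[(p-1)^{-1}+(2-p)^{-1}]^{1/p}$ for $1<p<2$. Since $(2-p)^{-1}$ stays bounded and $(p-1)^{-1/p}\sim(p-1)^{-1}$ as $p\to1^+$, this is precisely the assertion of the theorem for $d=1$.

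For the general case I would iterate exactly as in the final part of the proof of Theorem~\ref{Main_Theorem}. It suffices to prove the bound for $f$ in the dense subspace of $H^p_A(\R^d)$ spanned by analytic simple tensors $g_1\otimes\cdots\otimes g_d$ with $g_k\in H^p_A(\R)$ (density follows by applying the product of the one-variable Riesz projections, bounded on $L^p(\R^d)$ for $1<p<\infty$, to the simple tensors that are dense in $L^p(\R^d)$). Writing $g=T_{m_1}\otimes\cdots\otimes T_{m_{d-1}}(f)$ for the partial tensor operator acting on the first $d-1$ variables, one has, for fixed $(x_1,\cdots,x_{d-1})$, the identity $T_{m_1}\otimes\cdots\otimes T_{m_d}(f)(x_1,\cdots,x_d)=T_{m_d}\big(g_{(x_1,\cdots,x_{d-1})}\big)(x_d)$. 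Since each $T_{m_k}$ is bounded on $L^p(\R)$ and preserves the condition $\mathrm{supp}(\hat{h})\subset[0,\infty)$, both the slice $f_{(x_1,\cdots,x_{d-1})}$ and the function $g_{(x_1,\cdots,x_{d-1})}$ belong to $H^p_A(\R)$; applying the one-variable estimate of the previous step in the $x_d$-variable, raising to the $p$-th power and integrating over $(x_1,\cdots,x_{d-1})$ gives $\|T_{m_1}\otimes\cdots\otimes T_{m_d}(f)\|_{L^p(\R^d)}^p\lesssim_{C_{m_d}}[(p-1)^{-1}+(2-p)^{-1}]\,\|g\|_{L^p(\R^d)}^p$. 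Iterating this $d-1$ further times over the remaining variables and taking $p$-th roots produces $\|T_{m_1}\otimes\cdots\otimes T_{m_d}(f)\|_{L^p(\R^d)}\lesssim_{C_{m_1},\cdots,C_{m_d}}[(p-1)^{-1}+(2-p)^{-1}]^{d/p}\|f\|_{L^p(\R^d)}$. Letting $p\to1^+$, using that $[(p-1)^{-1}+(2-p)^{-1}]^{d/p}\sim_d(p-1)^{-d}$ near $p=1$, and passing from the dense subspace to all of $H^p_A(\R^d)$ by continuity completes the proof.

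The argument is a faithful transcription of the periodic one, so there is no single hard step; the points that warrant care are all Euclidean bookkeeping — verifying that $H^1_A(\R)$ embeds in $H^1(\R)$ with the factor-two norm comparison so that \eqref{TW_ineq} can be used on $H^1_A(\R)$, confirming that Proposition~\ref{PJ_interpolation} is insensitive to replacing its absolute constant by one depending on $C_{m_j}$, and justifying the density of analytic simple tensors together with the fact that the partial multiplier operators preserve analyticity in the remaining variables, which is what legitimises the slicing and hence the iteration.
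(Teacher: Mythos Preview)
Your proposal is correct and follows essentially the same approach as the paper: the paper's proof of Theorem~\ref{Euclidean_Thm} consists of the single sentence ``Using the above proposition and iteration, we obtain the following Euclidean version of Theorem~\ref{Main_Theorem},'' and you have faithfully expanded this into the expected details --- verifying the hypotheses of Proposition~\ref{PJ_interpolation} via Tao--Wright and Plancherel, then iterating over the variables exactly as in Section~\ref{Proof_of_Thm}. The additional bookkeeping you flag (the factor-two comparison on $H^1_A(\R)$, the dependence of the interpolation constant on $C_{m_j}$, and density of analytic tensors to justify slicing) is appropriate and does not deviate from the intended argument.
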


A  variant of Pichorides's theorem on $\R^d$ now follows from Theorem \ref{Euclidean_Thm} and (\ref{multi_Khintchine}). To formulate our result, for $k \in \Z$, define the rough Littlewood-Paley projection $P_k$ to be a multiplier operator given by 
$$\widehat{P_k ( f ) }  = [  \chi_{[2^k, 2^{k+1})}   +  \chi_{(-2^{k+1}, -2^k]}] \widehat{f} .$$
For $d \in  \N$, define the $d$-parameter rough Littlewood-Paley square function $S_{\R^d}$ on $\R^d$ by
$$ S_{\R^d} (f)  = \Big(  \sum_{k_1, \cdots, k_d \in  \Z} |P_{k_1} \otimes \cdots \otimes P_{k_d}  (f) |^2 \Big)^{1/2} $$
for $f$ initially belonging to the class of Schwartz functions on $\R^d$. Arguing as in Subsection \ref{Corollary_of_Thm}, we get a Euclidean version of Corollary \ref{Pichorides_extension} as a consequence of Theorem \ref{Euclidean_Thm}.

\begin{corollary}
For $d\in \N$, one has 
$$ \| S_{\R^d}  \|_{H^p_A (\R^d) \rightarrow L^p (\R^d)  } \sim_d (p-1)^{-d} $$
as $p \rightarrow 1^+$. 
\end{corollary}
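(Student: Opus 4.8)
The plan is to prove the two matching bounds separately, following the scheme of Subsection~\ref{Corollary_of_Thm}. For the upper bound I would mimic the proof of Corollary~\ref{Pichorides_extension}, using Theorem~\ref{Euclidean_Thm} in place of Theorem~\ref{Main_Theorem}. Fix, for each $j=1,\dots,d$, a sign sequence $\varepsilon^{(j)}=(\varepsilon^{(j)}_k)_{k\in\Z}\in\{\pm1\}^{\Z}$, and on $\R$ consider the multiplier operator $T^{\varepsilon^{(j)}}=\sum_{k\in\Z}\varepsilon^{(j)}_k P_k$; its symbol is constant on each dyadic interval $\pm[2^k,2^{k+1})$, so it is a Marcinkiewicz multiplier on $\R$ with $A_m=0$ in $(\ref{Marcinkiewicz_condition})$ and $L^\infty$-norm $1$, and hence the constant $C_m$ of Theorem~\ref{Euclidean_Thm} stays bounded uniformly in the signs. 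Applying Theorem~\ref{Euclidean_Thm} to the tensor product $T^{\varepsilon^{(1)}}\otimes\cdots\otimes T^{\varepsilon^{(d)}}$ gives
$$\bigl\|\,T^{\varepsilon^{(1)}}\otimes\cdots\otimes T^{\varepsilon^{(d)}}(f)\bigr\|_{L^p(\R^d)}\lesssim_d (p-1)^{-d}\,\|f\|_{L^p(\R^d)}\qquad(p\to1^+),$$
uniformly in $\varepsilon^{(1)},\dots,\varepsilon^{(d)}$, for $f$ in a dense class of $H^p_A(\R^d)$ (say, functions whose Fourier transform is smooth with compact support in $(0,\infty)^d$). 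Replacing the signs by Rademacher functions $r_k(\omega_j)$, integrating over $\Omega^d$, and then using Fubini's theorem and the multi-parameter Khintchine inequality $(\ref{multi_Khintchine})$ — whose constants do not blow up as $p\to1^+$ — together with the identity $\sum_{k_1,\dots,k_d}r_{k_1}(\omega_1)\cdots r_{k_d}(\omega_d)\,P_{k_1}\otimes\cdots\otimes P_{k_d}(f)=T^{r(\omega_1)}\otimes\cdots\otimes T^{r(\omega_d)}(f)$, I obtain $\|S_{\R^d}(f)\|_{L^p(\R^d)}\lesssim_d (p-1)^{-d}\|f\|_{L^p(\R^d)}$; density then yields $\|S_{\R^d}\|_{H^p_A(\R^d)\to L^p(\R^d)}\lesssim_d (p-1)^{-d}$.

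For the reverse inequality I would first reduce to $d=1$ by tensorisation, exactly as in Subsection~\ref{Corollary_of_Thm}: if $f\in H^p_A(\R)$ satisfies $\|S_{\R}(f)\|_{L^p(\R)}\ge c\,(p-1)^{-1}\|f\|_{L^p(\R)}$ for $1<p\le2$, then $g=f\otimes\cdots\otimes f$ lies in $H^p_A(\R^d)$, the operators $P_{k_j}$ act on separate variables so $S_{\R^d}(g)(x)=\prod_{j=1}^d S_{\R}(f)(x_j)$, and therefore $\|S_{\R^d}(g)\|_{L^p(\R^d)}=\|S_{\R}(f)\|_{L^p(\R)}^d\ge c^d(p-1)^{-d}\|g\|_{L^p(\R^d)}$. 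It then remains to produce a one-dimensional extremiser, which I would transfer from the periodic setting. By Pichorides's theorem~\eqref{Pichorides_thm}, for each $1<p\le2$ there is an analytic trigonometric polynomial $h$ on $\T$ with $\widehat{h}(0)=0$ (subtracting the mean affects the relevant ratio only by a bounded factor as $p\to1^+$) and $\|S_{\T}(h)\|_{L^p(\T)}\gtrsim (p-1)^{-1}\|h\|_{L^p(\T)}$. Fix a Schwartz function $\psi$ with $\widehat{\psi}$ supported in $[-1,1]$ and $\psi(0)=1$, and set $f_R(x)=e^{\pi i x}\,\psi(x/R)\,h(x)$. For $R>2$ the spectrum of $f_R$ is a union of intervals of length $2/R$ centred at the half-integers $n+\tfrac12$, $n\in\mathrm{supp}(\widehat{h})$, all lying in $(0,\infty)$ and none meeting a dyadic endpoint $2^k$; hence $f_R\in H^p_A(\R)$ and $P_{k}(f_R)=e^{\pi i x}\psi(x/R)\,(\Delta_{k+1}h)(x)$, so that $S_{\R}(f_R)=|\psi(\cdot/R)|\,S_{\T}(h)$ pointwise. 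A Riemann-sum computation using the $1$-periodicity of $S_{\T}(h)$ and of $h$ then gives $R^{-1}\|S_{\R}(f_R)\|_{L^p(\R)}^p\to \bigl(\int_\R|\psi|^p\bigr)\|S_{\T}(h)\|_{L^p(\T)}^p$ and $R^{-1}\|f_R\|_{L^p(\R)}^p\to\bigl(\int_\R|\psi|^p\bigr)\|h\|_{L^p(\T)}^p$ as $R\to\infty$, so choosing $R$ large makes $f=f_R$ do the job.

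I do not expect a deep obstacle here: the upper bound is essentially Theorem~\ref{Euclidean_Thm} combined with Khintchine's inequality, and the lower bound is a tensorisation plus the transference of Pichorides's one-dimensional example. The one point needing care is the transference bookkeeping — aligning the Littlewood--Paley blocks $P_k$ on $\R$ with the blocks $\Delta_{k+1}$ on $\T$ while keeping the test functions analytic, which is exactly what the modulation by $e^{\pi i x}$ is designed to achieve, since it moves the spectrum onto half-integers so that no active frequency lies within $1/R$ of a power of $2$ — together with the routine limiting argument as $R\to\infty$. As an alternative to transference, one may rerun Pichorides's original argument directly on the upper half-plane, where canonical factorisation of $H^p$-functions is still available, and then tensorise; either route gives $\|S_{\R^d}\|_{H^p_A(\R^d)\to L^p(\R^d)}\gtrsim_d(p-1)^{-d}$, which, combined with the upper bound, yields the claimed equivalence $\sim_d(p-1)^{-d}$ as $p\to1^+$.
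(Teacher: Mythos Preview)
Your proposal is correct and follows essentially the same route the paper indicates (``arguing as in Subsection~\ref{Corollary_of_Thm}''): the upper bound via Theorem~\ref{Euclidean_Thm} applied to the randomised projections $\sum_k \pm P_k$ together with the multi-parameter Khintchine inequality, and the lower bound by tensorising a one-dimensional extremiser. Your explicit transference construction $f_R(x)=e^{\pi i x}\psi(x/R)h(x)$ correctly fills in the one detail the paper leaves to the reader, namely the existence of near-extremisers for $S_{\R}$ on $H^p_A(\R)$; the modulation by $e^{\pi i x}$ and the spectral bookkeeping (each bump lying strictly inside a single dyadic interval for $R>2$, so that $P_k(f_R)=e^{\pi i x}\psi(\cdot/R)\Delta_{k+1}h$) are handled accurately.
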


\begin{rmk}
The multiplier operators covered in Theorem \ref{Euclidean_Thm} are properly contained in the class of general multi-parameter Marcinkiewicz multiplier operators treated in Theorem $6'$ in Chapter IV of \cite{Stein}.
For a class of smooth multi-parameter Marcinkiewicz multipliers  M. Wojciechowski \cite{Woj} proves that their $L^p (\R^d) \rightarrow L^p(\R^d)$ operator norm is of order $(p-1)^{-d}$ and that they are bounded on the $d$-parameter Hardy space $H^p (\R \times \cdots\times \R)$ for all $1 \leq p \leq 2$. Note that the multi-parameter Littlewood-Paley square function is not covered by this result; see also \cite{Bakas} for more refined negative statements.
\end{rmk}


\begin{thebibliography}{1}
\bibitem{Bakas}
Bakas, Odysseas. \emph{Endpoint Mapping properties of the Littlewood-Paley square function}. preprint arXiv:1612.09573 (2016).
\bibitem{Bourgain}
Bourgain, Jean. \emph{On the behavior of the constant in the Littlewood-Paley inequality}. In: Geometric Aspects of Functional Analysis (1987-88), pp. 202-208. Lecture notes in math. 1376, Springer Berlin, 1989.
\bibitem{Chen}
Chen, Danin. \emph{Multipliers on certain function spaces}. PhD thesis, University of Winsconsin-Milwaukee, 1998.
\bibitem{Duren}
Duren, Peter L. \emph{Theory of $H^p$ spaces}. Vol. 38. New York: Academic press, 1970.
\bibitem{Edwards_Gaudry}
Edwards, Robert E., and Garth Ian Gaudry. \emph{Littlewood-Paley and multiplier theory}. Ergebnisse der Mathematik und ihrer Grenzgebiete, Band 90. Springer-Verlag Berlin-New York, 1977.
\bibitem{Goldberg}
Goldberg, David. \emph{A local version of real Hardy spaces}. PhD thesis, Princeton University, 1978.
\bibitem{PeterJones}
Jones, Peter Wilcox.
\emph{$L^{\infty}$ estimates for the {$\bar \partial $} problem in a half-plane.} 
Acta Math. 150, no.1-2 (1983): 137--152. 
\bibitem{KX}
Kislyakov, Serguei Vital'evich, and Quanhua Xu. \emph{Real interpolation and singular integrals}. Algebra i Analiz 8, no. 4 (1996): 75-109.
\bibitem{KR} Krasnosel'skii, Mark Aleksandrovich, and Iakov Bronislavovich Rutitskii. \emph{Convex functions and Orlicz spaces}. P. Noordhoff, Groningen, 1961.
\bibitem{Marcinkiewicz}
Marcinkiewicz, J\'ozef. \emph{Sur les multiplicateurs des s\'eries de Fourier}. Studia Math. 8 (1939): 78-91.
\bibitem{Pavlovic}
Pavlovi\'c, Miroslav. \emph{Introduction to function spaces on the disk}. Posebna Izdanja [Special editions] 20, Matematicki Institut SANU, Belgrade, 2004.
\bibitem{Pichorides}
Pichorides, Stylianos K. \emph{A remark on the constants of the Littlewood-Paley inequality}. Proc. Amer. Math. Soc. 114, no. 3 (1992): 787-789.
\bibitem{RubShi} Rubel, Lee A. and Allen L. Shields. \emph{The failure of interior-exterior factorization in the polydisc and the ball}. Tohoku Math. J. 24 (1972): 409-413.
\bibitem{Rudin} Rudin, Walter. \emph{Function theory in polydiscs}. W.A. Benjamin, New York and Amsterdam, 1969.
\bibitem{Stein}
Stein, Elias M. \emph{Singular integrals and differentiability properties of functions} (PMS-30). Vol. 30. Princeton university press, 2016.
\bibitem{TW}
Tao, Terence, and James Wright. \emph{Endpoint multiplier theorems of Marcinkiewicz type}. Rev. Mat. Iberoam. 17, no. 3 (2001): 521-558.
\bibitem{Woj}Wojciechowski, Micha\l. \emph{A Marcinkiewicz type multiplier theorem for $H^1$ spaces on product domains}. Studia Math. 140, no. 3 (2000): 273-287.
\bibitem{Zygmund}
Zygmund, Antoni. \emph{On the convergence and summability of power series on the circle of convergence (I)}. Fundamenta Math. 30 (1938): 170-196.
\bibitem{Zygmund_book}
Zygmund, Antoni. \emph{Trigonometric series}. Vol. I, II.
Cambridge University Press, 2002.

\end{thebibliography}
\end{document}